\newcommand{\NN}{\mathbb{N}} 
\newcommand{\ZZ}{\mathbb{Z}} 
\newcommand{\RR}{\mathbb{R}} 
\newcommand{\CC}{\mathbb{C}} 
\newcommand{\PP}{\mathbb{P}} 
\newcommand{\tgsp}{\mathrm{T}} 
\newcommand{\arr}{\mathcal{A}} 
\newcommand{\TT}{T} 
\newcommand{\kk}{\mathcal{K}} 
\newcommand{\bb}{\mathcal{B}} 
\newcommand{\MM}{\mathcal{M}} 
\newcommand{\GG}{\mathcal{G}} 
\newcommand{\NS}{\mathcal{S}} 
\newcommand{\HH}{\mathcal{H}} 
\newcommand{\ps}[1]{\left\langle#1\right\rangle} 
\newcommand{\st}{\mid} 
\newcommand{\card}[1]{\left|#1\right|} 
\newcommand{\rest}[1]{|_{#1}} 
\newcommand{\Bl}[2]{\operatorname{Bl}_{#1}{#2}} 
\newcommand{\XX}[1]{X_{#1}}
\newcommand{\Xdelta}{\XX{\Delta}}
\newcommand{\YY}[2][\Xdelta]{Y(#1,#2)}
\newcommand{\subfan}[1]{\Delta(#1)}
\newcommand{\dt}[2][]{t^{#1}(#2)} 
\newcommand{\II}[1]{\mathcal{I}_{#1}}
\newcommand{\play}{\mathcal{C}} 
\DeclareMathOperator{\supp}{supp}
\DeclareMathOperator{\codim}{codim}
\DeclareMathOperator{\pws}{\mathcal{P}} 
\DeclareMathOperator{\rk}{rk} 
\DeclareMathOperator{\scap}{\cap} 
\renewcommand{\bar}{\overline}
\DeclareMathOperator{\des}{des}
\DeclareMathOperator{\inv}{inv}
\DeclareMathOperator{\lec}{lec}
\newcommand{\arrA}[1]{\arr_{A_{#1}}}
\newcommand{\DeltaA}[1]{\Delta_{A_{#1}}}
\newcommand{\FFA}[1]{\mathcal{F}_{A_{#1}}}
\newcommand{\XdeltaA}[1]{X_{\DeltaA{#1}}} 
\newcommand{\YYAT}[1]{\mathcal{Y}^T(A_{#1})} 
\newcommand{\YYAH}[1]{\mathcal{Y}^H(A_{#1})} 
\theoremstyle{definition}
\newtheorem{defn}{Definition}[section]
\theoremstyle{plain}
\newtheorem{thm}[defn]{Theorem}
\newtheorem{lemma}[defn]{Lemma}
\newtheorem{prop}[defn]{Proposition}
\theoremstyle{remark}
\newtheorem{rmk}[defn]{Remark}
\newtheorem{exam}[defn]{Example}
\let\@@pmod\pmod
\DeclareRobustCommand{\pmod}{\@ifstar\@pmods\@@pmod}
\def\@pmods#1{\mkern5mu({\operator@font mod}\mkern 6mu#1)}
\let\@@mod\mod
\DeclareRobustCommand{\mod}{\@ifstar\@mods\@@mod}
\def\@mods#1{\mkern5mu{\operator@font mod}\mkern 6mu#1}
\numberwithin{equation}{section}
\begin{document}

\title[A basis for the cohomology of compact models]{A basis for the cohomology of compact models of toric arrangements}

\author[G. Gaiffi]{Giovanni Gaiffi}
\address{\parbox{\linewidth}{Dipartimento di Matematica, Università di Pisa\\
         Largo B.\ Pontecorvo, 5\\
         56127 Pisa, Italy}}
\email{giovanni.gaiffi@unipi.it}

\author[O. Papini]{Oscar Papini}
\address{\parbox{\linewidth}{Istituto di Scienza e Tecnologie dell'Informazione ``A.\ Faedo''\\
         Consiglio Nazionale delle Ricerche\\
         Via G.\ Moruzzi, 1\\
         56124 Pisa, Italy}}
\email{oscar.papini@isti.cnr.it}

\author[V. Siconolfi]{Viola Siconolfi}
\address{\parbox{\linewidth}{Dipartimento di Matematica, Università di Pisa\\
         Largo B.\ Pontecorvo, 5\\
         56127 Pisa, Italy}}
\email{viola.siconolfi@dm.unipi.it}
\thanks{V.S.\ supported by PRIN 2017 `Moduli and Lie Theory', Di\-par\-ti\-men\-to di Ma\-te\-ma\-ti\-ca, U\-ni\-ver\-si\-tà di Pi\-sa.}

\begin{abstract}%
In this paper we find monomial bases for the integer cohomology rings of compact wonderful models of toric arrangements. In the description of the monomials various combinatorial objects come into play: building sets, nested sets, and the fan of a suitable toric variety. We provide some examples computed via a SageMath program and then we focus on the case of the toric arrangements associated with root systems of type \(A\). Here the combinatorial description of our basis offers a geometrical point of view on the relation between some Eulerian statistics on the symmetric group.
\end{abstract}

\subjclass[2010]{14N20, 05E16, 05C30}

\keywords{Toric arrangements, compact models, configuration spaces, Eulerian numbers}

\maketitle

\section{Introduction}
Let $\TT$ be an $n$-dimensional torus and let $X^*(\TT)$ be its group of characters; it is a lattice of rank $n$ and, by choosing a basis, we have isomorphisms $X^*(\TT)\simeq\ZZ^n$ and $\TT\simeq(\CC^*)^n$. Given an element $\chi\in X^*(\TT)$, the corresponding character on $\TT$ will be denoted by $x_\chi\colon\TT\to\CC^*$.
\begin{defn}\label{def_toric_arrangement}
A \emph{layer} in $\TT$ is a subvariety of $\TT$ of the form
\[
\kk(\Gamma,\phi)\coloneqq\{t\in\TT\mid x_\chi(t)=\phi(\chi)\text{ for all } \chi\in\Gamma\}
\]
where $\Gamma<X^*(\TT)$ is a split direct summand and $\phi\colon\Gamma\to\CC^*$ is a homomorphism. A \emph{toric arrangement} $\arr$ is a (finite) set of layers $\{\kk_1,\dotsc,\kk_r\}$ in $\TT$. A toric arrangement is called \emph{divisorial} if every layer has codimension~1.
\end{defn}
In~\cite{deconcgaiffi1} it is shown  how to construct \emph{projective wonderful models} for  the complement \(\MM(\arr)=\TT\setminus\bigcup_i \kk_i\). A projective wonderful model is a smooth projective variety containing \(\MM(\arr)\) as an open set and such that the complement of \(\MM(\arr)\) is a divisor with normal crossings and smooth irreducible components. In~\cite{deconcgaiffi2} the integer cohomology ring of these projective wonderful models was described by showing generators and relations.

In this paper we describe a basis for the integer cohomology modules. This description calls into play the relevant combinatorial objects that characterize the geometrical and topological properties of these models: the fan of a suitable toric variety, the building set associated to the arrangement and its nested sets. 

The construction of projective models of toric arrangements is a further step in a rich theory that was originated by De~Concini and Procesi in \cite{wonderful1,wonderful2}, where they studied wonderful models for the complement of a subspace arrangement, providing both a projective and a non-projective version of their construction.

In some cases the toric and subspace constructions provide the same variety. This happens for instance when we deal with root (hyperplane or toric) arrangements of type \(A\). Therefore in this case we can compare the new basis of the cohomology described in this paper with the old one coming from the subspace construction. Part of the description of these bases is similar but there are differences, that will lead us to find a bijection between two families of graphs (labeled forests) and a geometric interpretation of the equidistribution of two statistics (\emph{des} and \emph{lec}) on the symmetric group. 

Since both subspace and toric models are involved in our results, we start providing a sketch  of the history of the theory of wonderful models from both points of view.

\subsection{Some history of linear and toric wonderful models} 
The construction of wonderful models of subspace arrangements in \cite{wonderful1,wonderful2} was originally motivated by the study of Drinfeld's construction in~\cite{drinfeld} of special solutions of the Knizhnik-Zamolodchikov equations with some prescribed asymptotic behavior, then it turned out that the role of these models is crucial in several areas of mathematics. 
For instance in the case of a complexified root arrangement of type \(A_n\) (which we will deal with in Section~\ref{Sec:esempioAn} of this paper) the minimal model coincides with the moduli spaces of stable curves of genus 0 with \(n+2\) marked points. 

In the seminal papers of De~Concini and Procesi the notions of building sets and nested sets appeared for the first time in a general version. In~\cite{wonderful1} the authors showed, using a description of the cohomology rings of the projective wonderful models to give an explicit presentation of a Morgan algebra, that the mixed Hodge numbers and the rational homotopy type of the complement of a complex subspace arrangement depend only on the intersection lattice (viewed as a ranked poset). The cohomology rings of the models of subspace arrangements were also studied in~\cite{yuzvinsky,gaiffiselecta}, where some integer bases were provided, and, in the real case, in \cite{ehkr,rains}. The arrangements associated with complex reflection groups were dealt with in~\cite{hendersonwreath} from the representation theoretic point of view and in~\cite{callegarogaiffilochak} from the homotopical point of view. 
 
The connections between the geometry of these models and the Chow rings of matroids were  pointed out first in~\cite{chow} and then in~\cite{adiprasito}, where they also played a crucial role in the study of some relevant log-concavity problems. The relations with toric and tropical geometry were enlightened for instance in~\cite{feichtnersturmfels,denham,amini}.
 
The study of toric arrangements started in~\cite{looijenga} and received then a new impulse from several points of view. In~\cite{deconciniprocesivergne} and~\cite{DCP3} the role of toric arrangements as a link between partition functions and box splines is pointed out; interesting  enumerative and combinatorial aspects have been investigated via the Tutte polynomial and arithmetics matroids in \cite{mocitoricroot,mocituttetoric,dadderiomocitoric}. As for the topology of the complement \(\MM(\arr)\) of a divisorial toric arrangement, the generators of the cohomology modules over \(\CC\) where exhibited in~\cite{deconcproc} via local nonbroken circuits sets, and in the same paper the cohomology ring structure was determined in the case of totally unimodular arrangements. By a rather general approach, Dupont in~\cite{dupont} proved the rational formality of \(\MM(\arr)\). In turn, in~\cite{CDDMP}, it was shown, extending the results in~\cite{calledelu,callegaro2019erratum} and~\cite{Pagariatwo}, that the data needed in order to state the presentation of the rational cohomology ring of \(\MM(\arr)\) is fully encoded in the poset given by all the connected components of the intersections of the layers. It follows that in the divisorial case the combinatorics of this poset determines the rational homotopy of \(\MM(\arr)\).

One of the motivations for the construction of projective wonderful models of a toric arrangement \(\arr\) in~\cite{deconcgaiffi1}, in addition to the interest in their own  geometry, was that they could be an important tool to explore the generalization of the above mentioned results to the non-divisorial case.

Indeed the presentation of the cohomology ring of these models described in~\cite{deconcgaiffi2} was used in~\cite{mocipaga} to construct a Morgan differential algebra which determines the rational homotopy type of \(\MM(\arr)\). We notice that these models,  and therefore their associated Morgan algebras, depend not only on the initial combinatorial data, but also on some choices (see Section~\ref{Sec:briefdescription} for more details). In~\cite{deconcgaiffi3} a new differential graded algebra was constructed as a direct limit of the above mentioned differential Morgan algebras: it is quasi isomorphic to any of the Morgan algebras of the projective wonderful models of $\MM(\arr)$ and it has a presentation which depends only on a set of initial discrete data extracted from \(\arr\), thus proving that in the non-divisorial case the rational homotopy type of $\MM(\arr)$ depends only on these data.

As another application of the projective wonderful models of a toric arrangement, Denham and Suciu showed in~\cite{denhamsuciu} that (in the divisorial case) \(\MM(\arr)\) is both a duality space and an abelian duality space.

\subsection{Structure of this paper}
In Section~\ref{Sec:briefdescription} we will briefly recall from~\cite{deconcgaiffi1} the construction of the projective wonderful models associated with a toric arrangement. This is done in two steps: first one embeds the torus in a suitable smooth projective toric variety \(\Xdelta\) with fan \(\Delta\), then one considers the arrangement of subvarieties (in the sense of Li~\cite{li}) given by the closures of the layers of \(\arr\).  One chooses a suitable \emph{building set} \(\GG\) of subvarieties in \(\Xdelta\) and blowups them in a prescribed order to obtain the projective wonderful model \(\YY{\GG}\). The \(\GG\)-\emph{nested sets} describe the boundary of the model. The definitions of building sets and nested sets are recalled in this section. Example~\ref{ex_arrangement} provides a non trivial instance in dimension 3 of this construction, computed with the help of a SageMath program (see~\cite{dcgp-papadima}).

In Section~\ref{Sec:presentationofcohomology} we recall from~\cite{deconcgaiffi2} the presentation of the integer cohomology ring of \(\YY{\GG}\) as a quotient of a polynomial ring via generators and relations. 

Section~\ref{Sec:maintheorem} is devoted to our main result. We provide a description of a monomial \(\ZZ\)-basis of \(H^*(\YY{\GG},\ZZ)\). Every element of this basis has two factors: one is a monomial that depends essentially on a nested set \(\NS\) of \(\GG\) with certain labels (in analogy with the case of subspace models), the other one comes from the cohomology of a toric subvariety of \(\Xdelta\) associated with \(\NS\). With the help of the above mentioned SageMath program we provide a basis for the model of Example~\ref{ex_arrangement}.

Finally, we devote Section~\ref{Sec:esempioAn} to the case of a divisorial toric arrangement of type \(A_n\). We make a canonical choice of the fan \(\Delta\), i.e.\ we take the fan associated with the Coxeter chambers. Therefore \(\Xdelta\) is the toric variety of type \(A_n\) studied for instance in~\cite{procesi90,stembridge92,stanley,dolgachevlunts} and the minimal toric projective model is isomorphic to the moduli space of stable curves of genus 0 with \(n+3\) marked points, i.e.\ to the minimal projective wonderful model of the hyperplane arrangement of type \(A_{n+1}\).

This suggests to compare the new basis described in this paper with the basis coming from \cite{yuzvinsky,gaiffiselecta}. Both bases are described by labeled graphs. On one side we have forests on \(n+1\) leaves with labels on internal vertices, equipped by an additional label: a permutation in the symmetric group \(S_j\), where \(j\) is the number of trees. On the other side we have forests on \(n+2\) leaves with labels on internal vertices. We will show an explicit bijection between these two families of forests. This will also provide us with a new combinatorial proof, with a geometric interpretation, of the equidistribution of two statistics on the symmetric group: the statistic of descents \emph{des} and the statistic \emph{lec} introduced by Foata and Han in~\cite{foatahan} (both give rise to the Eulerian numbers). 

\section{Brief description of compact models}\label{Sec:briefdescription}
In this section we recall the construction of a wonderful model starting from a toric arrangement $\arr$, mainly following~\cite{deconcgaiffi1} (see also~\cite{dcgp-papadima}).

First of all, let us fix some notation that will be used throughout this paper. Given a set $A$, we will use the symbol $\scap A$ to denote the intersection of its elements, namely
\[
\scap A=\bigcap_{B\in A}B.
\]
Recall from the Introduction that $X^*(\TT)$ is the group of characters of the torus $\TT$; likewise, we denote by $X_*(\TT)$ the group of one-parameters subgroups of $\TT$. Moreover we define the vector spaces $V=X_*(\TT)\otimes_{\ZZ} \RR$ and its dual $V^*=X^*(\TT)\otimes_{\ZZ} \RR$. The usual pairing $X^*(\TT)\times X_*(\TT)\to\ZZ$ and its extension to $V^*\times V\to\RR$ will both be denoted by the symbol $\ps{\cdot,\cdot}$. Given $\Gamma<X^*(\TT)$, we define
\begin{equation}\label{eq_vgamma}
V_\Gamma\coloneqq\left\{{v}\in V\st\ps{\chi,{v}}=0\text{ for all }\chi\in \Gamma\right\}.
\end{equation}
Given a fan $\Delta$ in $V$, the corresponding toric variety will be denoted by $\Xdelta$.

We want to build a model following the techniques described by Li in~\cite{li}: in that paper, which is  inspired by \cite{wonderful1, wonderful2, macphersonproc}, the author describes the construction of a compact model starting from an arrangement of subvarieties.

\begin{defn}\label{def_simplearrsubvar}
Let $X$ be a non-singular algebraic variety. A \emph{simple arrangement of subvarieties} of $X$ is a finite set $\Lambda$ of non-singular closed connected subvarieties properly contained in $X$ such that
\begin{enumerate}
\item for every two $\Lambda_i,\Lambda_j\in\Lambda$, either $\Lambda_i\cap\Lambda_j\in\Lambda$ or $\Lambda_i\cap\Lambda_j=\emptyset$;
\item if $\Lambda_i\cap\Lambda_j\neq\emptyset$, the intersection is \emph{clean}, i.e.\ it is non-singular and for every $y\in\Lambda_i\cap\Lambda_j$ we have the following conditions on the tangent spaces:
\[
\tgsp_y(\Lambda_i\cap\Lambda_j)=\tgsp_y(\Lambda_i)\cap\tgsp_y(\Lambda_j).
\]
\end{enumerate}
\end{defn}
\begin{defn}\label{def_arrsubvar}
Let $X$ be a non-singular algebraic variety. An \emph{arrangement of subvarieties} of $X$ is a finite set $\Lambda$ of non-singular closed connected subvarieties properly contained in $X$ such that
\begin{enumerate}
\item for every two $\Lambda_i,\Lambda_j\in\Lambda$, either $\Lambda_i\cap\Lambda_j$ is a disjoint union of elements of $\Lambda$ or $\Lambda_i\cap\Lambda_j=\emptyset$;
\item if $\Lambda_i\cap\Lambda_j\neq\emptyset$, the intersection is clean.
\end{enumerate}
\end{defn}




In the toric arrangements setting, the subvarieties will be given by the intersections of the layers of the arrangement, so we introduce the combinatorial object that describe them.
\begin{defn}
The \emph{poset of layers} of a toric arrangement $\arr$ is the set $\play(\arr)$ of the connected components of the intersections of some layers of $\arr$, partially ordered by reverse inclusion.
\end{defn}
\begin{rmk} 
\begin{enumerate}
\item The whole torus belongs to $\play(\arr)$, as it can be obtained as the intersection of no layers; we define $\play_0(\arr)\coloneqq \play(\arr)\setminus \{\TT\}$.
\item The intersection of two layers $\kk(\Gamma_1,\phi_1)$ and $\kk(\Gamma_2,\phi_2)$ is the disjont union of layers of the form $\kk(\Gamma,\phi_i)$, i.e.\ they share the same $\Gamma$, namely the saturation of $\Gamma_1+\Gamma_2$.
\end{enumerate}
\end{rmk}

Given a toric arrangement $\arr$ in a torus $\TT$, we embed $\TT$ in a suitable compact toric variety. In particular we build a toric variety whose associated fan satisfies the following \emph{equal sign} condition.

\begin{defn}
Let $\Delta$ be a fan in $V$. An element $\chi\in X^*(\TT)$ has the \emph{equal sign property} with respect to $\Delta$ if, for every cone $C\in\Delta$, either $\ps{\chi,c}\geq 0$ for all $c\in C$ or $\ps{\chi,c}\leq 0$ for all $c\in C$.
\end{defn}
\begin{defn}
Let $\Delta$ be a fan in $V$ and let $\kk(\Gamma,\phi)$ be a layer. A $\ZZ$-basis $(\chi_1,\dotsc,\chi_m)$ for $\Gamma$ is an \emph{equal sign basis} with respect to $\Delta$ if $\chi_i$ has the equal sign property for all $i=1,\dotsc,m$.
\end{defn}

We say that a toric variety $\Xdelta$ is \emph{good} for an arrangement $\arr$ if each layer of $\play(\arr)$ has an equal sign basis with respect to the fan $\Delta$. In fact in this situation the following Theorem holds; we present the statement from~\cite{deconcgaiffi2}, which summarizes Proposition~3.1 and Theorem~3.1 from~\cite{deconcgaiffi1}.

\begin{thm}[{\cite[Theorem 5.1]{deconcgaiffi2}}]
\label{thm_layer_chiusura}
For any layer $\kk(\Gamma,\phi)\in\play(\arr)$ let $H=H(\Gamma)\coloneqq\cap_{\chi\in \Gamma}\ker(x_\chi)$ be the corresponding homogeneous subtorus and let $V_\Gamma$ as in~\eqref{eq_vgamma}, i.e.
\[
V_\Gamma\coloneqq\left\{{v}\in V\st\ps{\chi,{v}}=0\text{ for all }\chi\in \Gamma\right\}.\]
\begin{enumerate}
\item \label{thm_layer_chiusura_i}
For every cone $C\in\Delta$, its relative interior is either entirely contained in $V_{\Gamma}$ or disjoint from $V_{\Gamma}$.
\item \label{thm_layer_chiusura_ii}
The collection of cones $C\in\Delta$ which are contained in $V_{\Gamma}$ is a smooth fan $\Delta_{H}$.
\item \label{thm_layer_chiusura_iii}
 $\bar{\kk(\Gamma,\phi)}$ is a smooth $H$-variety whose fan is $\Delta_{H}$.
\item \label{thm_layer_chiusura_iv}
 Let $\mathcal{O}$ be an orbit of $\TT$ in $\Xdelta$ and let $C_{\mathcal{O}}\in\Delta$ be the corresponding cone. Then
\begin{enumerate}
\item if $C_{\mathcal{O}}$ is not contained in $V_{\Gamma}$, $\bar{\mathcal{O}}\cap\bar{\kk(\Gamma,\phi)}=\emptyset$;
\item If $C_{\mathcal{O}}\subset V_{\Gamma}$, $\mathcal{O}\cap\bar{\kk(\Gamma,\phi)}$ is the $H$-orbit in $\bar{\kk(\Gamma,\phi)}$ corresponding to $C_{\mathcal{O}}\in\Delta_H$.
\end{enumerate}
\end{enumerate}
\end{thm}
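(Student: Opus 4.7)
The plan is to verify the four parts in sequence, using an equal sign basis $(\chi_1,\dotsc,\chi_m)$ of $\Gamma$ with respect to $\Delta$ (which exists because $\Xdelta$ is good for $\arr$) together with the orbit--cone correspondence for the smooth toric variety $\Xdelta$. For (i), given $C\in\Delta$ with a relative interior point $v\in V_\Gamma$, each linear form $\ps{\chi_i,\cdot}$ has constant weak sign on $C$ and vanishes at the interior point $v$; any such form must vanish identically on the affine span of $C$, for otherwise it would change sign near $v$. Hence $\ps{\chi_i,\cdot}\equiv 0$ on $C$ for every $i$, i.e.\ $C\subseteq V_\Gamma$.

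For (ii), this immediately shows that $\Delta_H\coloneqq\{C\in\Delta\mid C\subseteq V_\Gamma\}$ is closed under faces and intersections inherited from $\Delta$, so it is a subfan. Smoothness relative to the lattice $X_*(H)=X_*(\TT)\cap V_\Gamma$ follows from two ingredients: since $\Gamma$ is a split direct summand of $X^*(\TT)$, the sublattice $X_*(H)$ is saturated in $X_*(\TT)$; and by smoothness of $\Delta$, each $C\in\Delta_H$ is generated by part of a $\ZZ$-basis of $X_*(\TT)$ whose elements lie in $X_*(H)$, which by saturation extends to a $\ZZ$-basis of $X_*(H)$.

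Parts (iii) and (iv) are handled together via a local analysis in the affine chart $U_C=\mathrm{Spec}(\CC[C^\vee\cap X^*(\TT)])$ associated with each $C\in\Delta$. Since $\kk(\Gamma,\phi)$ is a translate of the subtorus $H$, its closure is $H$-invariant; by the equal sign property, for each $i$ either $x_{\chi_i}$ or $x_{\chi_i}^{-1}$ is a regular function on $U_C$, and the defining equations $x_{\chi_i}=\phi(\chi_i)$ can be rewritten as regular equations there. If $C\not\subseteq V_\Gamma$, some $\chi_i$ is nonzero on the relative interior of $C$, so the corresponding regular function vanishes on the orbit $\mathcal{O}_C$, and the equation $x_{\chi_i}^{\pm 1}=\phi(\chi_i)^{\pm 1}\in\CC^*$ cannot hold there; this yields $\mathcal{O}_C\cap\bar{\kk(\Gamma,\phi)}=\emptyset$ and proves (iv.a). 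If instead $C\subseteq V_\Gamma$, then all $x_{\chi_i}$ are invertible on $U_C$ and the equations transversally cut out a smooth subvariety that meets $\mathcal{O}_C$ in exactly one $H$-orbit, namely the one corresponding to $C$ viewed as an element of $\Delta_H$; gluing these local pictures identifies $\bar{\kk(\Gamma,\phi)}$ with the smooth $H$-toric variety of fan $\Delta_H$, yielding (iii) and (iv.b).

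The main obstacle is the transversality and ``single $H$-orbit'' claim in the last step: one must check that the equations $x_{\chi_i}=\phi(\chi_i)$, after equal sign rewriting, cut out the intersection with $\mathcal{O}_C$ in the expected codimension $m$ and produce precisely one $H$-orbit. This ultimately rests on the fact that $\chi_1,\dotsc,\chi_m$ form a $\ZZ$-basis of $\Gamma$, so that the corresponding map from the $C$-stratum to $(\CC^*)^m$ is a quotient by $H$ and the intersection is visibly a single fiber.
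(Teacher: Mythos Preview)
The paper does not prove this theorem: it is quoted verbatim from \cite{deconcgaiffi2} (where it summarizes Proposition~3.1 and Theorem~3.1 of \cite{deconcgaiffi1}), so there is no in-paper proof to compare your proposal against. Your argument is nonetheless the natural one and is essentially correct; it is in the same spirit as the proofs in the cited sources, which also exploit the equal sign basis to control the behaviour of the characters $x_{\chi_i}$ on each affine chart $U_C$.

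One small point to tighten: in part~(iv.a) the statement concerns $\bar{\mathcal{O}}\cap\bar{\kk(\Gamma,\phi)}$, not merely $\mathcal{O}\cap\bar{\kk(\Gamma,\phi)}$. Your argument shows that if $C_{\mathcal{O}}\not\subseteq V_\Gamma$ then $\mathcal{O}$ misses $\bar{\kk(\Gamma,\phi)}$; to conclude for the closure, note that $\bar{\mathcal{O}}$ is the union of the orbits $\mathcal{O}_{C'}$ with $C'\supseteq C_{\mathcal{O}}$, and any such $C'$ also fails to lie in $V_\Gamma$, so the same argument applies orbit by orbit. With this addition your proof is complete.
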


As a consequence the set of the connected components of the intersections of the closures of the layers $\kk(\Gamma,\phi)\in \arr$ in $\Xdelta$ is an arrangement of subvarieties according to Li's definition.

Following \cite{deconcgaiffi1} we now introduce the wonderful model associated with an arrangement $\Lambda$ of subvarieties in a generic non-singular algebraic variety $X$. 
To do so we need to define the notion of \emph{building sets} and \emph{nested sets}.

\begin{defn}\label{def_building_set_simple}
Let $\Lambda$ be a simple arrangement of subvarieties. A subset $\GG\subseteq\Lambda$ is a \emph{building set} for $\Lambda$ if for every $L\in\Lambda\setminus\GG$ the minimal elements (w.r.t.\ the inclusion) of the set $\{G\in\GG\st L\subset G\}$ intersect transversally and their intersection is $L$. These minimal elements are called the \emph{$\GG$-factors} of $L$.
\end{defn}
\begin{defn}\label{def_nested_set_simple}
Let $\GG$ be a building set for a simple arrangement $\Lambda$. A subset $\NS\subseteq\GG$ is called ($\GG$-)\emph{nested} if for any antichain\footnote{An \emph{antichain} in a poset is a set of pairwise non-comparable elements.} $\{A_1,\dotsc,A_k\}\subseteq\NS$, with $k\geq 2$, there is an element in $\Lambda$ of which $A_1,\dotsc,A_k$ are the $\GG$-factors.
\end{defn}
\begin{rmk}
Since the empty set has no antichains of cardinality at least $2$, the definition above applies vacuously for it.
\end{rmk}
\begin{rmk}\label{rmk:empty_intersection}
We notice that if $\mathcal{H}$ is a subset of $\GG$ whose elements have empty intersection, then it cannot be contained in any $\GG$-nested set.
\end{rmk}

In case the arrangement $\Lambda$ is not simple, the definitions above apply locally: first of all, we define the \emph{restriction} of an arrangement of subvarieties $\Lambda$ to an open set $U\subseteq X$ to be the set
\[
\Lambda\rest{U}\coloneqq\{\Lambda_i\cap U\mid \Lambda_i\in\Lambda,\ \Lambda_i\cap U\neq\emptyset\}.
\]
\begin{defn}\label{def_building_set}
Let $\Lambda$ be an arrangement of subvarieties of $X$. A subset $\GG\subseteq\Lambda$ is a \emph{building set} for $\Lambda$ if there is a cover $\mathcal{U}$ of open sets  of $X$ such that
\begin{enumerate}
\item for every $U\in\mathcal{U}$, the restriction $\Lambda\rest{U}$ is simple;
\item for every $U\in\mathcal{U}$, $\GG\rest{U}$ is a building set for $\Lambda\rest{U}$.
\end{enumerate}
\end{defn}
\begin{defn}\label{def_nested_set}
Let $\GG$ be a building set for an arrangement $\Lambda$. A subset $\NS\subseteq\GG$ is called ($\GG$-)\emph{nested} if there is an open cover $\mathcal{U}$ of $X$ such that, for every $U\in\mathcal{U}$, $\Lambda\rest{U}$ is simple, $\GG\rest{U}$ is building for $\Lambda\rest{U}$ and for at least one $W\in\mathcal{U}$, $\NS\rest{W}$ is $\GG\rest{W}$-nested. (In particular $A\cap W\neq\emptyset$ for all $A\in\NS$.)
\end{defn}

Instead of defining a building set in terms of a given arrangement, it is often convenient to study the notion of ``building'' as an intrinsic property of a set of subvarieties.
\begin{defn}\label{def_absolute_building}
A finite set $\GG$ of connected subvarieties of $X$ is called a \emph{building set} if the set of the connected components of all the possible intersections of collections of subvarieties from $\GG$ is an arrangement of subvarieties, called the arrangement \emph{induced} by $\GG$ and denoted by $\Lambda(\GG)$, and $\GG$ is a building set for $\Lambda(\GG)$ according to Definition~\ref{def_building_set}.
\end{defn}
From now on, Definition~\ref{def_absolute_building} applies when we refer to a set of subvarieties as ``building'' without specifying the arrangement.

Given an arrangement $\Lambda$ of a non-singular variety $X$ and a building set $\GG$ for $\Lambda$, a wonderful model $\YY[X]{\GG}$ can be obtained as the closure of the locally closed embedding:
\[
\left(X\setminus\bigcup_{\Lambda_i\in \Lambda}\Lambda_i\right) \longrightarrow \prod_{G\in \GG}\Bl{G}{X}
\]
where $\Bl{G}{X}$ is the blowup of $X$ along $G$. 
Concretely we can build $\YY[X]{\GG}$ one step at a time, through a series of blowups, as described in the following theorem.

\begin{thm}[{see~\cite[Theorem~1.3]{li}}]\label{thm:iterblowup}
Let $\GG$ be a building set in a non-singular variety $X$. Let us order the elements $G_1,\dotsc,G_m$ of $\GG$ in such a way that for every $1\leq k \leq m$ the set $\GG_k\coloneqq\{G_1,\dotsc,G_k\}$ is building. Then if we set $X_0\coloneqq X$ and $X_k\coloneqq \YY[X]{\GG_k}$ for $1\leq k\leq m$, we have
\[
X_k=\Bl{\dt{G_k}}{X_{k-1}},
\]
where $\dt{G_k}$ denotes the dominant transform\footnote{In the blowup of a variety $M$ along a centre $F$ the dominant transform of a subvariety $Z$ coincides with the proper transform if $Z\nsubseteq F$ (and therefore it is isomorphic to the blowup of $Z$ along $Z\cap F$), and with $\pi^{-1}(Z)$ if $Z\subseteq F$, where $\pi\colon \Bl{F}{M}\to M$ is the projection. We will use the same notation $\dt{Z}$ for both the proper and the dominant transform of $Z$, if no confusion arises.} of $G_k$ in $X_{k-1}$.
\end{thm}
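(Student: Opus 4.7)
The plan is to proceed by induction on $k$. The base case $k=0$ is immediate since $X_0=X=\YY[X]{\emptyset}$. For the inductive step I assume $X_{k-1}=\YY[X]{\GG_{k-1}}$ and aim to identify $\YY[X]{\GG_k}$ with $\Bl{\dt{G_k}}{X_{k-1}}$. Projection of $\prod_{G\in\GG_k}\Bl{G}{X}$ onto its first $k-1$ factors, combined with the defining closure construction, provides a birational morphism $\pi_1\colon\YY[X]{\GG_k}\to X_{k-1}$; projection onto the last factor gives a morphism $\pi_2\colon\YY[X]{\GG_k}\to\Bl{G_k}{X}$. Both restrict to the identity on the open dense set $U=X\setminus\bigcup_i\Lambda_i$, so $(\pi_1,\pi_2)$ realizes $\YY[X]{\GG_k}$ as the closure of $U$ inside $X_{k-1}\times_X\Bl{G_k}{X}$.

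Next I would invoke the universal property of blowups. Pulling back the exceptional divisor of $\Bl{G_k}{X}\to X$ along $\pi_2$ produces a Cartier divisor on $\YY[X]{\GG_k}$; by the very definition of the dominant transform recalled in the footnote, its image under $\pi_1$ is supported on $\dt{G_k}$, and the pullback along $\pi_1$ of the ideal sheaf of $\dt{G_k}$ becomes invertible on $\YY[X]{\GG_k}$. The universal property of $\Bl{\dt{G_k}}{X_{k-1}}$ then provides a canonical morphism $\YY[X]{\GG_k}\to\Bl{\dt{G_k}}{X_{k-1}}$ over $X_{k-1}$. Since both varieties are proper and birational to $X_{k-1}$ with the same dense open locus $U$, showing that this morphism is an isomorphism reduces to the local statement that $\dt{G_k}$ is a smooth subvariety of $X_{k-1}$ embedded by a regular sequence, since then blowup commutes with the chosen embeddings and the codomain is normal.

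This smoothness assertion is the main obstacle, and the step where the building hypothesis enters essentially. Working locally and using the open cover $\mathcal{U}$ from Definition~\ref{def_building_set}, one may assume that $\Lambda$ is simple. A secondary induction on the position of $G_k$ relative to each previously blown-up centre $G_j$ (for $j<k$) then breaks into three cases: if $G_k$ is disjoint from $G_j$ the transform is undisturbed; if $G_k\subseteq G_j$ the dominant transform at that stage is the full preimage, smooth as a projective bundle; and if $G_k$ and $G_j$ meet cleanly, the building property of $\GG_{k-1}$ forces $G_k\cap G_j$ either to belong to $\GG_{k-1}$ or to be a transverse intersection of $\GG_{k-1}$-factors of some element of $\Lambda$. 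The standard lemmas on proper transforms under blowups along clean, transversally intersecting smooth centres then guarantee that $\dt{G_k}$ stays smooth at each intermediate stage and meets the accumulated exceptional divisors cleanly, closing the induction. The bulk of the technical work is this local coordinate analysis, which is precisely what Li~\cite[Theorem~1.3]{li} carries out in detail; any shorter proof sketch must delegate these computations to that reference.
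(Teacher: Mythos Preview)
The paper does not give its own proof of this statement: Theorem~\ref{thm:iterblowup} is quoted from Li~\cite[Theorem~1.3]{li} and used as a black box, with no argument supplied. Your sketch is therefore not competing with anything in the paper itself; it is a reasonable outline of the strategy behind Li's result, and you correctly flag at the end that the substantive local computations must be delegated to that reference.
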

\begin{rmk}
Any total ordering of the elements of a building set $\GG=\{G_1,\dotsc,G_m\}$ which refines the ordering by inclusion, that is $i<j$ if $G_i\subset G_j$, satisfies the condition of Theorem~\ref{thm:iterblowup}.
\end{rmk}
Let us denote by $\pi\colon\YY[X]{\GG}\to X$ the blowup map. The boundary of $\YY[X]{\GG}$ admits a description in terms of $\GG$-nested sets.
\begin{thm}[{see~\cite[Theorem~1.2]{li}}]\label{thm:Yboundary}
The complement in $\YY[X]{\GG}$ of $\pi^{-1}(X\setminus\bigcup\Lambda_i)$ is the union of the divisors $\dt{G}$, where $G$ ranges among the elements of $\GG$. Let $\mathcal{U}$ be an open cover of $X$ such that for every $U\in\mathcal{U}$ $\Lambda\rest{U}$ is simple and $\GG\rest{U}$ is building for $\Lambda\rest{U}$. Then, given $U\in\mathcal{U}$ and $A_1,\dotsc,A_k\in\GG$, the intersection $\dt{A_1}\cap\dotsb\cap\dt{A_k}\cap\pi^{-1}(U)$ is non-empty if and only if $\{A_1\rest{U},\dotsc,A_k\rest{U}\}$ is $\GG\rest{U}$-nested; moreover, if the intersection is non-empty then it is transversal.
\end{thm}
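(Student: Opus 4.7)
The strategy has two layers: reduce to the simple case by working in a chart $U\in\mathcal{U}$, and then use the iterated blowup description of Theorem~\ref{thm:iterblowup}. Inside $U$, order $\GG\rest{U}$ as $G_1,\dotsc,G_m$ refining inclusion and realize $\pi^{-1}(U)$ as the tower $U=X_0\leftarrow X_1\leftarrow\dotsb\leftarrow X_m$ with $X_k=\Bl{\dt{G_k}}{X_{k-1}}$. Both claims are then handled by induction on $m$.

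The first assertion is almost immediate from the building-set axiom: every $L\in\Lambda$ is the clean intersection of its $\GG$-factors, so in particular $L\subseteq G$ for some $G\in\GG$, giving $\bigcup_{\Lambda_i\in\Lambda}\Lambda_i=\bigcup_{G\in\GG}G$. Along the tower of blowups, the dominant transform of $G_k$ is introduced at step $k$ and is not altered set-theoretically by later blowups along centres contained in it, so $\pi^{-1}(\bigcup_G G)=\bigcup_G\dt{G}$.

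For the second assertion, analyze the effect of the final blowup along $\dt{G_m}$, with $G_m$ maximal in $\GG\rest{U}$. If $G_m$ is not among the $A_j$, the intersection $\dt{A_1}\cap\dotsb\cap\dt{A_k}$ in $X_m$ is the proper transform of the analogous intersection in $X_{m-1}$; since $\GG\rest{U}$-nestedness of a set avoiding $G_m$ coincides with $(\GG\rest{U}\setminus\{G_m\})$-nestedness, the inductive hypothesis closes this case. If instead $G_m$ coincides with some $A_j$, identify the exceptional divisor $\dt{G_m}\subseteq X_m$ with the projectivized normal bundle $\PP(N_{\dt{G_m}/X_{m-1}})$; the other $\dt{A_i}$ then cut out inside this bundle the normal directions of those $\GG$-factors that meet $G_m$ properly, together with the proper transforms of those $A_i$ strictly contained in $G_m$. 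The transversality of $\GG$-factors built into the building-set axiom forces the resulting intersection to be non-empty exactly when $\{A_1,\dotsc,A_k\}$ satisfies the nested-set condition, and to be transverse in that case.

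The main obstacle is executing the bundle analysis in the second case cleanly: one must split the $A_i$ into those strictly contained in $G_m$, those strictly containing $G_m$, and those meeting $G_m$ only properly, and verify in local coordinates that their dominant transforms meet the exceptional divisor in the configuration prescribed by the $\GG$-factor combinatorics. Transversality of the full intersection $\dt{A_1}\cap\dotsb\cap\dt{A_k}\cap\pi^{-1}(U)$ then follows from the standard fact that the exceptional divisor of a blowup along a smooth centre is transverse to the proper transforms of smooth subvarieties meeting the centre cleanly, combined with the transversality already present in the simple arrangement $\Lambda\rest{U}$.
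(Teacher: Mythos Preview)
The paper does not supply its own proof of this theorem: it is stated with the attribution ``see \cite[Theorem~1.2]{li}'' and no argument is given. So there is nothing in the paper to compare your proposal against.

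That said, your sketch follows the standard route taken in Li's paper: localize to a chart where the arrangement is simple, realize the model as the tower of blowups of Theorem~\ref{thm:iterblowup} with the $G_i$ ordered refining inclusion, and induct on $m$, splitting at the last step according to whether the maximal element $G_m$ is among the $A_j$. Two small remarks. First, since you take $G_m$ maximal, the trichotomy you mention in the final paragraph collapses: no $A_i$ can strictly contain $G_m$, so only the cases $A_i\subsetneq G_m$ and $A_i$ meeting $G_m$ properly survive. Second, in the case $G_m\notin\{A_1,\dotsc,A_k\}$ you should check that $(\GG\rest{U}\setminus\{G_m\})$ is still building (it is, because $G_m$ is maximal) and that non-emptiness of the intersection in $X_{m-1}$ really is equivalent to non-emptiness in $X_m$; this is where one uses that the centre $\dt{G_m}$, being the transform of a maximal element, cannot coincide with a connected component of $\dt{A_1}\cap\dotsb\cap\dt{A_k}$ when the latter is transversal. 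Your last paragraph is honest in flagging the exceptional-divisor analysis as the genuine work; filling it in is exactly the content of Li's argument.
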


\begin{rmk}
We notice that Definition~\ref{def_nested_set} and the statement of Theorem~\ref{thm:Yboundary} are slightly different from the ones in the literature (see \cite{li,deconcgaiffi1,deconcgaiffi2,deconcgaiffi3,dcgp-papadima}), where a subset $\NS\subseteq\GG$ is considered $\GG$-nested if $\NS\rest{U}$ is $\GG\rest{U}$-nested for every $U\in\mathcal{U}$. We think that our Definition~\ref{def_nested_set} and Theorem~\ref{thm:Yboundary} are more precise and remove an ambiguity, since they point out that the intersection property depends on the property of being nested locally in the chart $\pi^{-1}(U)$ of $\YY[X]{\GG}$.
\end{rmk}

\begin{exam}\label{ex_arrangement}
In order to compute some non-trivial examples, a series of scripts, extending the ones described in~\cite{papini}, were developed in the \texttt{SageMath} environment~\cite{sagemath}.

Let $\arr=\{\kk_1,\kk_2,\kk_3\}$ be the arrangement in $(\CC^*)^3$, with coordinates $(x,y,z)$, whose layers are defined by the equations
\begin{align*}
\kk_1\colon & xz^2=1, \\
\kk_2\colon & z=xy, \\
\kk_3\colon & xy^2=1,
\end{align*}
We can view them as $\kk_i=\kk(\Gamma_i,\phi_i)$ where $\Gamma_1<\ZZ^3$ is generated by $(1,0,2)$, $\Gamma_2$ by $(1,1,-1)$ and $\Gamma_3$ by $(1,2,0)$, and $\phi_i$ is the constant function equal to $1$ for $i=1,2,3$. Figure~\ref{fig_ex_arrangement_poset} represents the Hasse diagram of the poset of layers $\play(\arr)$.

In order to define a projective model for the arrangement, the first ingredient is a good toric variety. For this example, following the algorithm of~\cite{deconcgaiffi1}, we built a toric variety $\Xdelta$ whose fan has 72~rays and 140~maximal cones, listed in Appendix~\ref{app:examplefan} respectively. This is not the smallest fan associated with a good toric variety for this arrangement, but it has some addictional properties that are useful for the computation of a presentation for the cohomology ring of $\YY{\GG}$.

The next choice is a building set $\GG$; for this example we use the subset of the elements of $\play(\arr)$ that are pictured in a double circle in Figure~\ref{fig_ex_arrangement_poset} and obtain the model $Y_\arr=\YY{\GG}$. We will study this model in later examples of this paper.
\begin{figure}[!htb]
\centering
\begin{tikzpicture}[every node/.style={circle,fill=white,inner sep=1pt,minimum size=5mm,font=\scriptsize}]
\def\vunit{1.2}
\def\hunit{0.8}
\node[draw] (t) at (0,0) {$\TT$};
\node[draw,double] (k1) at (-\hunit,\vunit) {$\kk_1$};
\node[draw,double] (k2) at (0,\vunit) {$\kk_2$};
\node[draw,double] (k3) at (\hunit,\vunit) {$\kk_3$};
\node[draw] (l1) at (-1.5*\hunit,2*\vunit) {$L_1$};
\node[draw,double] (l2) at (-0.5*\hunit,2*\vunit) {$L_2$};
\node[draw,double] (l3) at (0.5*\hunit,2*\vunit) {$L_3$};
\node[draw] (l4) at (1.5*\hunit,2*\vunit) {$L_4$};
\node[draw,double] (p1) at (-1.5*\hunit,3*\vunit) {$P_1$};
\node[draw,double] (p2) at (-0.5*\hunit,3*\vunit) {$P_2$};
\node[draw,double] (p3) at (0.5*\hunit,3*\vunit) {$P_3$};
\node[draw,double] (p4) at(1.5*\hunit,3*\vunit) {$P_4$};
\draw[->] (t) -- (k1);
\draw[->] (t) -- (k2);
\draw[->] (t) -- (k3);
\draw[->] (k1) -- (l1);
\draw[->] (k1) -- (l2);
\draw[->] (k1) -- (l3);
\draw[->] (k2) -- (l1);
\draw[->] (k2) -- (l4);
\draw[->] (k3) -- (l2);
\draw[->] (k3) -- (l3);
\draw[->] (k3) -- (l4);
\draw[->] (l1) -- (p1);
\draw[->] (l1) -- (p2);
\draw[->] (l1) -- (p3);
\draw[->] (l1) -- (p4);
\draw[->] (l2) -- (p1);
\draw[->] (l2) -- (p3);
\draw[->] (l3) -- (p2);
\draw[->] (l3) -- (p4);
\draw[->] (l4) -- (p1);
\draw[->] (l4) -- (p2);
\draw[->] (l4) -- (p3);
\draw[->] (l4) -- (p4);
\end{tikzpicture}
\caption{Poset of layers for the arrangement of Example~\ref{ex_arrangement}, with the elements of the building set highlighted with a double circle.}
\label{fig_ex_arrangement_poset}
\end{figure}
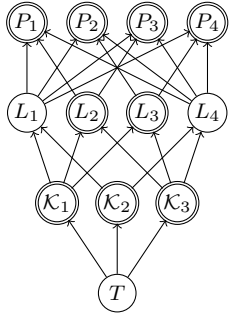
\end{exam}

\section{Presentation of the cohomology ring}\label{Sec:presentationofcohomology}
In this section we recall a presentation of the cohomology ring of the model $\YY{\GG}$. As we have seen, given a toric arrangement $\arr$ and a toric variety $\Xdelta$ which is good for it, the set $\Lambda=\{\bar{\kk}\st \kk\in\play_{0}(\arr)\}$ is an arrangement of subvarieties of $\Xdelta$ according to Li.


The cohomology ring is described as a quotient of a polynomial ring with coefficients in $H^*(\Xdelta,\ZZ)$. We present here a result by Danilov that provides an explicit presentation of the cohomology ring of a toric variety.

\begin{thm}[{\cite[Theorem~10.8]{danilov}}]\label{thm:toric_variety_cohomology}
Let $X_\Delta$ be a smooth complete $\TT$-variety. Let $\mathcal{R}$ be the set of primitive generators of the rays of $\Delta$ and define a polynomial indeterminate $C_r$ for each $r\in\mathcal{R}$. Then
\[
H^*(\Xdelta,\ZZ)\simeq\ZZ[C_r\mid r\in\mathcal{R}]/(\II{\mathrm{SR}}+\II{\mathrm{L}})
\]
where
\begin{itemize}
\item $\II{\mathrm{SR}}$ is the \emph{Stanley-Reisner ideal}
\[
\II{\mathrm{SR}}\coloneqq(C_{r_1}\dotsb C_{r_k}\mid r_1,\dotsc,r_k\text{ do not belong to a cone of }\Delta);
\]
\item $\II{\mathrm{L}}$ is the \emph{linear equivalence ideal}
\[
\II{\mathrm{L}}\coloneqq\left(\sum_{r\in\mathcal{R}}\ps{\beta,r}C_r \,\middle|\, \beta\in X^*(\TT)\right).
\]
\end{itemize}
Notice that for $\II{\mathrm{SR}}$ it is sufficient to take only the square-free monomials, and for $\II{\mathrm{L}}$ it is sufficient to take only the $\beta$'s belonging to a basis of $X^*(\TT)$.
\end{thm}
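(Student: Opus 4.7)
The plan is to identify each polynomial generator $C_r$ with the class in $H^2(\Xdelta,\ZZ)$ of the torus-invariant Weil divisor $D_r\subset\Xdelta$ corresponding to the ray through $r$. Since $\Xdelta$ is smooth, every such $D_r$ is Cartier, and the assignment $C_r\mapsto[D_r]$ extends to a graded ring homomorphism $\varphi\colon\ZZ[C_r\st r\in\mathcal{R}]\to H^*(\Xdelta,\ZZ)$ doubling degree. The content of the theorem is then that $\varphi$ is surjective with kernel exactly $\II{\mathrm{SR}}+\II{\mathrm{L}}$.

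First I would check $\II{\mathrm{SR}}+\II{\mathrm{L}}\subseteq\ker\varphi$. For $\II{\mathrm{SR}}$: if $r_1,\dotsc,r_k$ do not lie in a common cone of $\Delta$, the orbit-cone correspondence forces $D_{r_1}\scap\dotsb\scap D_{r_k}=\emptyset$, so the product vanishes. For $\II{\mathrm{L}}$: each $\beta\in X^*(\TT)$ gives a character $x_\beta$ which extends to a rational function on $\Xdelta$ with principal divisor $\sum_{r\in\mathcal{R}}\ps{\beta,r}D_r$, so the class $\sum_r\ps{\beta,r}[D_r]$ vanishes already in $\mathrm{Pic}(\Xdelta)$ and thus in $H^2(\Xdelta,\ZZ)$.

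Next I would prove surjectivity. By the orbit-cone correspondence, for each cone $\sigma\in\Delta$ with rays $r_1,\dotsc,r_k$ the orbit closure $V(\sigma)$ is smooth and coincides with the transverse intersection $D_{r_1}\scap\dotsb\scap D_{r_k}$, so $[V(\sigma)]=[D_{r_1}]\dotsb[D_{r_k}]$ lies in the image of $\varphi$. When $\Xdelta$ is projective, a generic one-parameter subgroup of $\TT$ induces a Bia{\l}ynicki--Birula decomposition into affine cells indexed by $\Delta$, whose closures are the $V(\sigma)$; the general complete case reduces to this via an equivariant projective refinement. The resulting cellular structure shows that $H^*(\Xdelta,\ZZ)$ is torsion-free, concentrated in even degrees, with additive basis $\{[V(\sigma)]\}_{\sigma\in\Delta}$ and Betti number $b_{2k}$ equal to the number of cones of codimension $k$ in $V$. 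In particular $\varphi$ is surjective.

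The main obstacle is the reverse inclusion $\ker\varphi\subseteq\II{\mathrm{SR}}+\II{\mathrm{L}}$, which I would establish by a Hilbert-function comparison. Reducing $\ZZ[C_r]$ modulo $\II{\mathrm{SR}}$ yields the Stanley--Reisner ring of the simplicial complex underlying $\Delta$; this ring is Cohen--Macaulay because $\Delta$ is a complete simplicial fan, and one checks that the linear forms generating $\II{\mathrm{L}}$ (indexed by a basis of $X^*(\TT)$) form a regular sequence, killing exactly the Krull dimension. Consequently the Hilbert series of $\ZZ[C_r]/(\II{\mathrm{SR}}+\II{\mathrm{L}})$ equals the $h$-polynomial of $\Delta$, whose $k$-th coefficient is precisely the number of codimension-$k$ cones. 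This matches the Betti numbers produced by the cell decomposition, forcing $\varphi$ to be an isomorphism in each graded piece by a rank count. The two delicate ingredients are the regular-sequence / Cohen--Macaulay statement (combinatorial, via completeness of $\Delta$) and the cell decomposition (geometric, via Bia{\l}ynicki--Birula); both are standard inputs from the toric-geometry literature, cf.\ \cite{danilov}, that I would cite rather than reprove.
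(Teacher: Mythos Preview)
The paper does not prove this statement at all: Theorem~\ref{thm:toric_variety_cohomology} is quoted verbatim from Danilov \cite[Theorem~10.8]{danilov} and used as a black box, so there is no ``paper's own proof'' to compare against. Your sketch is a reasonable outline of the standard argument and is in the spirit of what one finds in Danilov or in Fulton's book, but since the paper simply cites the result, any proof you supply is necessarily going beyond what the paper does.

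A couple of technical caveats on your sketch, should you want to turn it into a full proof elsewhere. First, the reduction of the general complete case to the projective case ``via an equivariant projective refinement'' is not quite how the argument goes: a refinement changes the variety, and what you actually need is either Danilov's original spectral-sequence computation or the observation that smooth complete toric varieties still admit an affine paving (the Bia{\l}ynicki--Birula argument works once you have finitely many fixed points and a sufficiently general one-parameter subgroup, which completeness guarantees). Second, the Cohen--Macaulay\,/\,regular-sequence step is usually stated over a field; over $\ZZ$ you should either invoke the shellability of complete simplicial fans to get an explicit $\ZZ$-basis of the quotient, or argue that both sides are free $\ZZ$-modules and then compare ranks after tensoring with $\mathbb{Q}$.
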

Furthermore the residue class of $C_r$ in $H^2(\Xdelta,\ZZ)$ is the cohomology class of the divisor $D_r$ associated with the ray $r$ for each $r\in \mathcal R$. By abuse of notation we are going to denote this residue class in $H^2(\Xdelta,\ZZ)$ also by $C_r$.
\begin{rmk}
Given a layer $\kk(\Gamma,\phi)$, the inclusion $j\colon\bar{\kk(\Gamma,\phi)}\hookrightarrow\Xdelta$ induces a restriction map in cohomology
\begin{equation}\label{eq_restriction_map}
j^*\colon H^*(\Xdelta,\ZZ)\to H^*(\bar{\kk(\Gamma,\phi)},\ZZ).
\end{equation}
As noted in~\cite[Proposition~5.4]{deconcgaiffi2} this map is surjective and its kernel is generated by $\{C_r\mid r\in\mathcal{R},\ r\notin V_\Gamma\}$. In the sequel, we identify $\bar{\kk(\Gamma,\phi)}$ with $\XX{\Delta_H}$, where $\Delta_H$ is the same fan of Theorem~\ref{thm_layer_chiusura} (point~\ref{thm_layer_chiusura_iii}).
\end{rmk}

\begin{exam}[Example~\ref{ex_arrangement}, continued]\label{ex_arrangement_toric_cohom}
We computed the presentation of $H^*(\Xdelta,\ZZ)$ as in Theorem~\ref{thm:toric_variety_cohomology} for the toric variety $\Xdelta$ of Example~\ref{ex_arrangement}. The cohomology ring is isomorphic to a quotient of the ring $\ZZ[C_1,\dotsc,C_{72}]$ where each indeterminate $C_i$ corresponds to the ray $r_i$ as listed in the table in Appendix~\ref{app:examplefan}. We won't report the full presentation here and give only the Betti numbers:
\begin{align*}
\rk(H^0(\Xdelta,\ZZ))&{}=1, \\
\rk(H^2(\Xdelta,\ZZ))&{}=69, \\
\rk(H^4(\Xdelta,\ZZ))&{}=69, \\
\rk(H^6(\Xdelta,\ZZ))&{}=1.
\end{align*}
\end{exam}

The presentation of the cohomology ring of $\YY{\GG}$ has been computed with an additional hypothesis on the building set.

\begin{defn}\label{def_well_connected}
A building set $\GG$ for $\Lambda$ is \emph{well-connected} if for any subset $\HH\subseteq \GG$, if the intersection $\scap \HH$ has two or more connected components, then each of these components belongs to $\GG$.
\end{defn}

\begin{exam}[Example~\ref{ex_arrangement}, continued]
The building set $\GG$ described in Example~\ref{ex_arrangement} is a well-connected building set.
\end{exam}

Some general properties of well-connected building sets are studied and presented in~\cite[Section~6]{dcgp-papadima}.

We recall here the main ingredients for the presentation of $H^*(\YY{\GG},\ZZ)$. Let $Z$ be an indeterminate and let $R=H^*(\Xdelta,\ZZ)$ viewed as $\ZZ[C_r\mid r\in\mathcal{R}]/(\II{\mathrm{SR}}+\II{\mathrm{L}})$ as in Theorem~\ref{thm:toric_variety_cohomology}. (For brevity we will use again the symbols $C_r$ instead of the corresponding equivalence classes in the quotient.) For every $G\in\Lambda$ we denote by $\Gamma_G$ the lattice such that $G=\bar{\kk(\Gamma_G,\phi)}$.

Given a pair $(M,G)\in(\Lambda\cup\{\Xdelta\})\times\Lambda$ with $G\subseteq M$, we can choose a basis $(\chi_1,\dotsc,\chi_s)$ for $\Gamma_G$ such that it is equal sign with respect to $\Delta$ and that $(\chi_1,\dotsc,\chi_k)$, with $k\leq s$, is a basis for $\Gamma_M$ (if $M$ is the whole variety $\Xdelta$, then we choose any equal sign basis of $\Gamma_G$ and let $k=0$). We define the polynomials $P^M_G\in R[Z]$ as
\[
P^M_G(Z)\coloneqq\prod_{j=k+1}^{s}\left(Z-\sum_{r\in\mathcal{R}}\min(0,\ps{\chi_j,r})C_r\right).
\]
If $G=M$, we set $P_G^G\coloneqq 1$ since it is an empty product. As shown in~\cite[Proposition~6.3]{deconcgaiffi2}, different choices of the $P_G^M$'s are possible; for example, in~\cite[Remark~4.4]{mocipaga}, the authors suggest the polynomials
\[
P^M_G(Z)=Z^{s-k}+\prod_{j=k+1}^{s}\left(-\sum_{r\in\mathcal{R}}\min(0,\ps{\chi_j,r})C_r\right).
\]

Now we define the set
\[
\mathcal{W}\coloneqq\{(G,\HH)\in\GG\times\pws(\GG)\mid G\subsetneq H\text{ for all } H\in \HH\},
\]
where $\pws(\GG)$ is the power set of $\GG$. Notice that $(G,\emptyset)\in\mathcal{W}$ for all $G\in\GG$. For each pair $(G,\HH)\in\mathcal{W}$ we define a relation $F(G,\HH)$ in the following way: let $M$ be the unique connected component of $\scap{\HH}$ that contains $G$ (as usual, if $\HH=\emptyset$ then $M=\Xdelta$), and for $G\in\GG$ let $\GG_G\coloneqq\{H\in\GG\st H\subseteq G\}$; with this information we define the polynomial $F(G,\HH)\in R[T_G\mid G\in\GG]$ as
\[
F(G,\HH)\coloneqq P_G^M\Big(\sum_{H\in \GG_G}-T_H\Big)\prod_{K\in \HH}T_K.
\]
Finally let $\mathcal{W}_0\coloneqq\{\HH\in\pws(\GG)\mid \scap\HH=\emptyset\}$. For each $\HH\in\mathcal{W}_0$ we define the polynomial $F(\HH)\in R[T_G\mid G\in\GG]$ as
\[
F(\HH)\coloneqq\prod_{K\in \HH}T_K.
\]
\begin{thm}[{\cite[Theorem~7.1]{deconcgaiffi2}}]\label{thm_relcoom}
Let $\arr$, $\Xdelta$ and $\Lambda$ be as in the beginning of this section and let $\GG$ be a well-connected building set for $\Lambda$. Let also $R=H^*(\Xdelta,\ZZ)$ viewed as a polynomial ring as in Theorem~\ref{thm:toric_variety_cohomology}. The cohomology ring of the wonderful model $H^*(\YY{\GG},\ZZ)$ is isomorphic to the quotient of $R[T_G\mid G\in\GG]$ by the ideal $\II{\GG}$ generated by
\begin{enumerate}
\item the products $C_rT_G$, with $G\in\GG$ and $r\in\mathcal{R}$ such that $r$ does \emph{not} belong to $V_{\Gamma_G}$;
\item the polynomials $F(G,\HH)$ for every pair $(G,\HH)\in\mathcal{W}$;
\item the polynomials $F(\HH)$ for every $\HH\in\mathcal{W}_0$.
\end{enumerate}
The isomorphism is given by sending \(T_G\) for $G\in \GG$ to the cohomology class associated with the divisor in the boundary which is the transform of $G$ ($\dt{G}$). Putting all together, we have
\begin{align*}
H^*(\YY{\GG},\ZZ)&{}\simeq R[T_G\mid G\in\GG]/\II{\GG}\\
&{}\simeq\ZZ[C_r,T_G\st r\in\mathcal{R},G\in\GG]/(\II{\mathrm{SR}}+\II{\mathrm{L}}+\II{\GG}).
\end{align*}
\end{thm}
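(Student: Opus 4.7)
The plan is to prove the theorem by induction on the cardinality of \(\GG\), exploiting the iterated-blowup description of Theorem~\ref{thm:iterblowup}. Order the elements \(G_1,\dotsc,G_m\) of \(\GG\) so that \(\GG_k=\{G_1,\dotsc,G_k\}\) is building for every \(k\), with \(G_i\subset G_j\Rightarrow i<j\). Write \(X_0=\Xdelta\) and \(X_k=\Bl{\dt{G_k}}{X_{k-1}}\). The base case is Danilov's presentation of \(H^*(\Xdelta,\ZZ)\) as stated in Theorem~\ref{thm:toric_variety_cohomology}. The inductive step relies on the Keel/Li blowup formula: if \(Z\subset X\) is a smooth closed subvariety with \(J=\ker(H^*(X,\ZZ)\to H^*(Z,\ZZ))\) and \(P_{Z\mid X}\) is the Chern polynomial of the normal bundle \(N_{Z/X}\), then
\[
H^*(\Bl{Z}{X},\ZZ)\cong H^*(X,\ZZ)[T]/\bigl(J\cdot T,\ P_{Z\mid X}(-T)\bigr),
\]
where \(-T\) is identified with the class of the exceptional divisor.

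First I would identify each generator and relation on the geometric side. The variables \(T_G\) are the classes of the divisors \(\dt{G}\), introduced one at a time by the blowup formula; the variables \(C_r\) are the divisor classes pulled back from \(\Xdelta\). The relations of type~(1), namely \(C_rT_G=0\) for \(r\notin V_{\Gamma_G}\), correspond to the ``\(J\cdot T\)'' part of the blowup formula, because the kernel of the restriction \(H^*(\Xdelta,\ZZ)\to H^*(\bar{\kk(\Gamma_G,\phi)},\ZZ)\) is precisely generated by such \(C_r\) (see the remark after Theorem~\ref{thm:toric_variety_cohomology}). The relations of type~(3), \(\prod_{K\in\HH}T_K=0\) when \(\scap\HH=\emptyset\), follow because the dominant transforms of subvarieties with empty joint intersection remain disjoint at every stage of the iterated blowup. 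The relations of type~(2) carry the main content: I would show that
\[
P^M_G\Bigl(-\sum_{H\in\GG_G}T_H\Bigr)\prod_{K\in\HH}T_K
\]
is the image, in \(H^*(\YY{\GG},\ZZ)\), of the Chern polynomial \(P_{\dt{G}\mid \dt{M}}(-T_G)\) restricted to the stratum cut out by the nested family \(\HH\), coming from the ``\(P_{Z\mid X}(-T)\)'' part of the blowup formula applied at the stage when \(G\) is blown up inside the already-modified ambient.

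The technical heart of the argument is the explicit identification of \(P^M_G\) as the Chern polynomial of the appropriate normal bundle. Using the equal sign property, each character \(\chi_j\) in the chosen basis of \(\Gamma_G/\Gamma_M\) determines, via Theorem~\ref{thm_layer_chiusura}, a line bundle on \(\bar{M}\) whose restriction to \(\bar{G}\) is a normal direction, and whose Chern class is exactly \(\sum_{r\in\mathcal{R}}\min(0,\ps{\chi_j,r})C_r\) modulo the relations in \(H^*(\Xdelta,\ZZ)\). Multiplying these linear factors yields \(P^M_G\). The shift by \(-\sum_{H\in\GG_G}T_H\) accounts for how the relevant Chern class changes after each previous blowup along a subvariety contained in \(G\): each such step twists the normal class by minus the exceptional divisor, consistently with the formulas for blowups of nested centres.

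The main obstacle I expect is twofold. First, one must verify the independence of the presentation from the auxiliary choices (equal sign basis, total order refining inclusion, choice among admissible polynomials \(P^M_G\)); this amounts to showing that different choices differ by elements that already lie in \(\II{\mathrm{SR}}+\II{\mathrm{L}}+\II{\GG}\), and it parallels the corresponding step in~\cite{deconcgaiffi2} for subspaces. Second, and more delicately, one must check that the ideal generated by (1)--(3) captures \emph{all} relations and not merely those visible at a single blowup: this requires showing inductively that the naive candidate presentation for \(X_k\) is flat enough that localising to each chart \(\pi^{-1}(U)\) of the open cover in Definition~\ref{def_nested_set} recovers exactly the \(\GG\rest{U}\)-nested relations of Theorem~\ref{thm:Yboundary}. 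Well-connectedness of \(\GG\) is used here to guarantee that the ``connected component \(M\)'' in the definition of \(F(G,\HH)\) is itself an element of \(\GG\), so that all the intermediate normal bundles occurring in the induction are governed by variables already present in the presentation.
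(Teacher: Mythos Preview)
The paper does not contain a proof of this theorem: it is quoted verbatim from \cite[Theorem~7.1]{deconcgaiffi2} and used as a black box. There is therefore nothing in the present paper to compare your argument against.

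That said, your outline is essentially the strategy of the original reference: induction on \(\lvert\GG\rvert\) via the iterated blowup of Theorem~\ref{thm:iterblowup}, together with the Keel--Li presentation of the cohomology of a blowup, identifying the polynomials \(P^M_G\) with Chern polynomials of normal bundles and the shift \(-\sum_{H\in\GG_G}T_H\) with the accumulated twist from previous exceptional divisors. One correction: well-connectedness does \emph{not} guarantee that the connected component \(M\) of \(\scap\HH\) lies in \(\GG\); it only guarantees this when \(\scap\HH\) is disconnected. When \(\scap\HH\) is already connected, \(M=\scap\HH\) is an element of \(\Lambda\) but need not be in \(\GG\), and the polynomial \(P^M_G\) is still defined because it only requires \(M\in\Lambda\cup\{\Xdelta\}\). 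Your last sentence therefore misidentifies where well-connectedness enters; in \cite{deconcgaiffi2} it is used rather to control the inductive structure of the set \(\HH\) of subvarieties cut out on the last centre (compare \cite[Proposition~4.4]{deconcgaiffi2}, which is invoked in the proof of Theorem~\ref{teo_main} of the present paper).
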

\begin{rmk}
It was already noted in~\cite[Theorem 9.1]{deconcgaiffi1} that the cohomology of the projective wonderful model $\YY{\GG}$ is a free $\ZZ$-module and $H^i(\YY{\GG},\ZZ)=0$ for $i$ odd. 
\end{rmk}

\begin{exam}[Example~\ref{ex_arrangement}, continued]\label{ex_arrangement_model_cohom}
We computed the presentation of $H^*(Y_\arr,\ZZ)$ as in Theorem~\ref{thm_relcoom} for the model $Y_\arr$ of Example~\ref{ex_arrangement}. The cohomology ring is isomorphic to a quotient of the ring $\ZZ[C_1,\dotsc,C_{72},T_1,\dotsc,T_9]$, where the $C_i$'s are the same as Example~\ref{ex_arrangement_toric_cohom} and each $T_j$ corresponds to an element of $\GG$ in the following way:
\begin{align*}
\kk_1 & {}\rightsquigarrow T_1, & \kk_2 & {}\rightsquigarrow T_2, & \kk_3 & {}\rightsquigarrow T_3, & L_2 & {}\rightsquigarrow T_4, & L_3 & {}\rightsquigarrow T_5, \\ P_1 & {}\rightsquigarrow T_6, & P_3 & {}\rightsquigarrow T_7, & P_2 & {}\rightsquigarrow T_8, & P_4 & {}\rightsquigarrow T_9.
\end{align*}
Once again we won't report the full presentation here and give only the Betti numbers:
\begin{align*}
\rk(H^0(Y_\arr,\ZZ))&{}=1, \\
\rk(H^2(Y_\arr,\ZZ))&{}=75, \\
\rk(H^4(Y_\arr,\ZZ))&{}=75, \\
\rk(H^6(Y_\arr,\ZZ))&{}=1.
\end{align*}
\end{exam}

In the next Section we are going to give a description of a monomial basis of $R[T_G\mid G\in\GG]/\II{\GG}$.

\section{Main theorem}\label{Sec:maintheorem}
Let $\arr$ be a toric arrangement, let $\Xdelta$ be a good toric variety for it with associated fan $\Delta$, let $\Lambda$ be the poset of intersections of the closures of the layers of $\arr$ in $\Xdelta$ which is an arrangement of subvarieties, and let $\GG$ be a well-connected building set for $\Lambda$. 
In this Section we show a basis of the cohomology of $\YY{\GG}$ in terms of \emph{admissible functions} (see Definition~\ref{def_admissible}). The notion of admissible function is analogue to the one used in the linear setting case of subspace arrangements, which is introduced in~\cite{gaiffiselecta}.

We begin by proving a characterization of $\GG$-nested sets in this case, which will be useful in the proof of our main theorem.
\begin{prop}\label{prop_nested_characterization}
Let $\GG$ be a well-connected building set. A subset $\NS\subseteq\GG$ is $\GG$-nested if and only if for any antichain $\HH\subseteq\NS$ with at least two elements the intersection $\scap \HH$ is non-empty, connected, transversal, and does not belong to $\GG$.
\end{prop}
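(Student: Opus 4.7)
The plan is to prove both implications by translating between the antichain characterization in the statement and the local simple-arrangement nested condition baked into Definition~\ref{def_nested_set}, using an open chart $W$ as the pivot. A recurring technical tool is that each $A\in\GG$, being connected and smooth, is irreducible, so $A\cap W$ is either empty or open dense in $A$. This upgrades containments $A\cap W\subseteq A'\cap W$ to $A\subseteq A'$, upgrades $A\cap W\in\GG\rest{W}$ to $A\in\GG$, and upgrades $\scap\HH\cap W\in\GG\rest{W}$ to $\scap\HH\in\GG$ whenever $\scap\HH$ is connected.

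For $(\Rightarrow)$, fix an antichain $\HH\subseteq\NS$ of size at least $2$ and take from Definition~\ref{def_nested_set} a chart $W$ meeting every element of $\NS$ and certifying $\NS\rest{W}$ is $\GG\rest{W}$-nested. The density tool shows that $\HH\rest{W}$ is still an antichain of size at least $2$ in $\NS\rest{W}$, so the simple-case definition produces an $L\in\Lambda\rest{W}\setminus\GG\rest{W}$ whose $\GG\rest{W}$-factors are the elements of $\HH\rest{W}$; in particular $L=\scap\HH\cap W$ is non-empty, and the elements of $\HH$ meet transversally along $L$. This immediately yields $\scap\HH\neq\emptyset$, and $\scap\HH\notin\GG$ because otherwise $L=(\scap\HH)\cap W\in\GG\rest{W}$, contradicting $L\notin\GG\rest{W}$. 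For connectedness, invoke well-connectedness: if $\scap\HH$ split into $M_1\sqcup\dotsb\sqcup M_t$ with $t\geq 2$, then each $M_i\in\GG$, and the connected $L$ meets only one component $M_{i_0}$ with $M_{i_0}\cap W=L$, again forcing $L\in\GG\rest{W}$. Transversality, an open local condition, then spreads from the dense open $L$ to the now-connected smooth $\scap\HH$.

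For $(\Leftarrow)$, assume the antichain property. Applying the hypothesis to the antichain of minimal elements of $\NS$ (the case $\card{\NS}\leq 1$ being trivial) shows $\scap\NS\neq\emptyset$; pick $p\in\scap\NS$ and a chart $W\ni p$ from the building-set cover of $\Xdelta$. Every $A\in\NS$ meets $W$, and it remains to verify $\NS\rest{W}$ is $\GG\rest{W}$-nested in the simple sense. Given an antichain $\{A_1\rest{W},\dotsc,A_k\rest{W}\}$ of size at least $2$, density implies $\HH=\{A_1,\dotsc,A_k\}$ is an antichain in $\NS$, so by hypothesis $\scap\HH$ is non-empty, connected, transversal, and not in $\GG$; setting $L:=\scap\HH\cap W\in\Lambda\rest{W}\setminus\GG\rest{W}$, one sees that the $A_i\rest{W}$ strictly contain $L$ and intersect transversally along it. The remaining point is that the $A_i\rest{W}$ are minimal in $\GG\rest{W}$ above $L$: an intermediate $G\in\GG$ with $\scap\HH\subsetneq G\subsetneq A_i$ would allow $A_i$ to be replaced in $\HH$ by a smaller building-set element, and iterating this refinement together with the hypothesis for the resulting antichains eventually contradicts either antichain-ness or $\scap\HH\notin\GG$.

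I expect the main obstacle to lie in that last minimality step: ruling out an intermediate $G\in\GG$ between $\scap\HH$ and some $A_i$ requires a delicate replacement argument that respects both the antichain structure of $\NS$ and the membership conditions in $\GG$, since the hypothesis only concerns antichains in $\NS$. The remaining steps are routine bookkeeping around the open cover, the irreducibility of elements of $\GG$, and the translation between the simple-case definition and the global statement.
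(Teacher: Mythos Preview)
Your $(\Rightarrow)$ direction is essentially the paper's argument, and your density/irreducibility bookkeeping is a clean way to phrase the passage between global and local data.

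The gap is exactly where you flagged it: the minimality step in $(\Leftarrow)$. Your replacement argument cannot be carried out, because the intermediate $G\in\GG$ with $\scap\HH\subsetneq G\subsetneq A_i$ has no reason to lie in $\NS$, and the hypothesis speaks only about antichains contained in $\NS$. So after one replacement you are outside the range of the hypothesis and cannot iterate. There is no evident way to repair this line of attack.

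The paper's $(\Leftarrow)$ proceeds differently and avoids the problem. Let $B_1,\dotsc,B_r$ be the actual $\GG\rest{U}$-factors of $A=\scap\HH$. Each $A_i$ contains $A$, hence contains some $B_j$ by minimality; greedily partition $\HH$ into subsets $\HH_1,\dotsc,\HH_\ell$ with $B_j\subseteq\scap\HH_j$. One checks $A=B_1\cap\dotsb\cap B_\ell$, so $\ell=r$. Now comes the key point: each $\HH_j$ is an antichain \emph{in $\NS$} (it is a subset of $\HH$), so the hypothesis applies to it and gives that $\scap\HH_j$ is transversal. Comparing the two transversal decompositions
\[
A=B_1\cap\dotsb\cap B_\ell=(\scap\HH_1)\cap\dotsb\cap(\scap\HH_\ell),\qquad B_j\subseteq\scap\HH_j,
\]
a dimension count forces $B_j=\scap\HH_j$. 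Finally, if some $\card{\HH_j}\geq 2$ then the hypothesis says $\scap\HH_j\notin\GG$, contradicting $B_j\in\GG$; hence each $\HH_j$ is a singleton and $A_j=B_j$. The moral is that the hypothesis is applied not to refinements of $\HH$ by arbitrary $G\in\GG$, but to sub-antichains $\HH_j\subseteq\HH\subseteq\NS$, which stay inside $\NS$.
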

\begin{proof}[Proof of Proposition~\ref{prop_nested_characterization}]
\noindent\framebox{$\Rightarrow$}
Let $\NS$ be a $\GG$-nested set an let $\HH=\{A_1,\ldots,A_{k}\}$ be an antichain with $k\geq2$. The intersection $\scap\HH$ is not empty by Definition~\ref{def_nested_set} and Remark~\ref{rmk:empty_intersection}. If $\scap\HH$ is not connected then it would be the disjoint union of at least two elements in $\GG$ by well-connectedness. Let us consider $W\in\mathcal{U}$ as in Definition~\ref{def_nested_set}, so that $\NS\rest{W}$ is $\GG\rest{W}$-nested and for every $A\in\NS$ $A\cap W\neq\emptyset$, therefore $\HH\rest{W}=\{A_1\cap W,\dotsc,A_k\cap W\}$ is an antichain of $\NS\rest{W}$.
This implies that 
\[
(A_1\cap W)\cap\dotsb\cap (A_k\cap W)
\]
would be the $\GG\rest{W}$-decomposition of $G\cap W$, where $G\in\GG$ is one of the connected components of $\scap\HH$. We reached a contradiction because $G\cap W\in\GG\rest{W}$, and we deduce that $\scap\HH$ is connected and furthermore that it is not an element of $\GG$. The transversality of the intersection in $W$ also implies that $\scap\HH$ is transversal.

\noindent\framebox{$\Leftarrow$}
We now suppose that for every antichain $\HH\subseteq\NS$ with at least two elements the intersection $\scap\HH$ is non-empty, connected, transversal and does not belong to $\GG$.

Let $\mathcal{U}$ be an open cover as in Definition~\ref{def_building_set}. Let us first notice that $\scap \NS$ is equal to the intersection of the antichain given by the minimal elements of $\NS$, therefore it is non-empty and connected. Let $U\in\mathcal{U}$ such that $(\scap \NS)\cap U$ is not empty. We will prove that $\NS\rest{U}$ is $\GG\rest{U}$-nested.

Consider an antichain $\{A_1\cap U,\dotsc,A_k\cap U\}$ in $\NS\rest{U}$ with $k\geq 2$ (in particular $A_i\cap U$ is non-empty for every $i$): we will prove that $A_1\cap U,\dotsc,A_k\cap U$ are the $\GG\rest{U}$-factors of their intersection.

Let us put $\HH=\{A_1,\dotsc,A_k\}$. We observe that $\HH$ is an antichain, therefore by hypothesis we know that $A=\scap\HH$ is non-empty, connected, transversal and does not belong to $\GG$. Since $\GG\rest{U}$ is building, we know that $A\cap U$ is the transversal intersection of the minimal elements of $\GG\rest{U}$ among the ones containing $A\cap U$. We let those minimal elements be $B_1\cap U,\ldots,B_r\cap U$. For simplicity, in the rest of the proof we omit the reference to $U$. 

By minimality each of the $A_i$'s contains some of the $B_j$'s: in the next paragraph we partition $\HH$ in subsets according to this.

Up to reordering the indices of the $B_j$'s, we can assume that 
$B_1$ is contained in some of the $A_i$'s, and define $\HH_1:=\{A_i\in\HH\st B_1\subseteq A_i\}$. If $\HH_1=\HH$ we are done, otherwise  there is another $B_j$ contained in the elements of $\HH\setminus \HH_1$, and up to reordering we assume that this is $B_2$. We define $\HH_2:=\{A_i\in \HH\setminus \HH_1\st B_2\subseteq A_i\}$. If $\HH\setminus (\HH_1\cup \HH_2)$ is not empty we repeat the process with $B_3$ and obtain $\HH_3$.
We stop after $\ell$ steps, where $\ell\leq r$, when we have $\HH_1\cup\dotsb\cup \HH_{\ell}=\HH$.

By construction we have:
\begin{equation}\label{eq_inclusion_H_B}
B_1\subseteq \scap\HH_1, \ldots, B_{\ell}\subseteq\scap\HH_{\ell},
\end{equation}
so
\[
(\scap \HH_1)\cap\dotsb\cap(\scap\HH_{\ell})=\scap\HH=A\subseteq B_1\cap\dotsb\cap B_{\ell}\subseteq (\scap \HH_1)\cap\dotsb\cap(\scap\HH_{\ell})
\]
and this implies that $A=B_1\cap\dotsb\cap B_{\ell}$, that is to say, $r=\ell$.

Recall that $B_1\cap\dotsb\cap B_{\ell}$ is transversal and so is $\scap \HH$; since each $\HH_j$ is an antichain, $\scap \HH_j$ is transversal too, and it follows that $(\scap\HH_{1})\cap\dotsb\cap(\scap\HH_{\ell})$ is transversal. From this information and from~\eqref{eq_inclusion_H_B} we deduce that $\dim(B_j)=\dim(\scap\HH_j)$ for all $j=1,\ldots,\ell$, therefore $B_j=\scap\HH_j$. 

This implies that $\card{\HH_j}=1$ for any $j$, otherwise if $\card{\HH_j}>1$ that would contradict the fact that the intersection of an antichain does not belong to $\GG$.

Since $\HH_1\cup\dotsb\cup\HH_{\ell}=\HH=\{A_1,\ldots,A_k\}$ we deduce that $k=\ell$ and, up to a relabeling, we can assume that $\HH_i=\{A_i\}$. We conclude that $A_j=B_j$ and this shows that $A_1,\ldots,A_k$ are the $\GG$-factors of $A$.
\end{proof}

\begin{rmk}
Proposition~\ref{prop_nested_characterization} implies that when $\GG$ is well-connected the property of being nested can be expressed in global terms (without charts).
\end{rmk}

\begin{exam}[Example~\ref{ex_arrangement}, continued]\label{ex_arrangement_nested_sets}
The building set $\GG$ of Example~\ref{ex_arrangement} has 48 nested sets, namely
\[
\begin{array}{cccc}
\emptyset, &
\{\kk_1\}, &
\{\kk_2\}, &
\{\kk_3\}, \displaybreak[0] \\
\{L_2\}, &
\{L_3\}, &
\{P_1\}, &
\{P_2\}, \displaybreak[0] \\
\{P_3\}, &
\{P_4\}, &
\{\kk_1,\kk_2\}, &
\{\kk_1,L_2\}, \displaybreak[0] \\
\{\kk_1,L_3\}, &
\{\kk_1,P_1\}, &
\{\kk_1,P_2\}, &
\{\kk_1,P_3\}, \displaybreak[0] \\
\{\kk_1,P_4\}, &
\{\kk_2,\kk_3\}, &
\{\kk_2,P_1\}, &
\{\kk_2,P_2\}, \displaybreak[0] \\
\{\kk_2,P_3\}, &
\{\kk_2,P_4\}, &
\{\kk_3,L_2\}, &
\{\kk_3,L_3\}, \displaybreak[0] \\
\{\kk_3,P_1\}, &
\{\kk_3,P_2\}, &
\{\kk_3,P_3\}, &
\{\kk_3,P_4\}, \displaybreak[0] \\
\{L_2,P_1\}, &
\{L_2,P_3\}, &
\{L_3,P_2\}, &
\{L_3,P_4\}, \displaybreak[0] \\
\{\kk_1,\kk_2,P_1\}, &
\{\kk_1,\kk_2,P_2\}, &
\{\kk_1,\kk_2,P_3\}, &
\{\kk_1,\kk_2,P_4\}, \displaybreak[0] \\
\{\kk_1,L_2,P_1\}, &
\{\kk_1,L_2,P_3\}, &
\{\kk_1,L_3,P_2\}, &
\{\kk_1,L_3,P_4\}, \displaybreak[0] \\
\{\kk_2,\kk_3,P_1\}, &
\{\kk_2,\kk_3,P_2\}, &
\{\kk_2,\kk_3,P_3\}, &
\{\kk_2,\kk_3,P_4\}, \displaybreak[0] \\
\{\kk_3,L_2,P_1\}, &
\{\kk_3,L_2,P_3\}, &
\{\kk_3,L_3,P_2\}, &
\{\kk_3,L_3,P_4\}. \displaybreak[0] \\
\end{array}
\]
\end{exam}

Let \(\GG=\{G_1,G_2,\ldots, G_m\}\) be a well-connected building set and let \(\NS\) be a \(\GG\)-nested set. Given $A\in \NS$, we define $\NS_{A}:=\{B\in \NS\st A\subsetneq B\}$ and for every \(A\in \NS\) we denote by \(M_{\NS}(A)\) the (connected)  intersection $\scap \NS_{A}$. We will omit  the nested set $\NS$ and write just $M(A)$ for brevity when it is clear from the context which is the involved nested set.



\begin{defn}\label{def_admissible}
A function \(f\colon\GG \to \NN\) is (\(\GG\)-){\em admissible}  if it has both  the following properties:
\begin{enumerate}
\item \(\supp  f\) is  \(\GG\)-nested; 
\item for every \(A\in \supp f\) we have \(f(A)<\dim M_{\supp f}(A)- \dim A\).
\end{enumerate}
Notice that the zero function, i.e. the function such that $f(A)=0$ for  
every $A\in\GG$, is admissible since its support is the empty set.
\end{defn}


\begin{exam}[Example~\ref{ex_arrangement}, continued]\label{ex_arrangement_admissible}
For each nested set $\NS$ listed in Example~\ref{ex_arrangement_nested_sets}, we test if it can be the support of an admissible function by computing the maximum value that the candidate function can assume on the elements of $\NS$ (see Definition~\ref{def_admissible}). It turns out that only 7 of the 48 nested sets give rise to admissible functions, namely
\begin{equation}\label{eq_ex_arrangement_supports}
\emptyset,\ \{L_2\},\ \{L_3\},\ \{P_1\},\ \{P_2\},\ \{P_3\},\ \{P_4\}.
\end{equation}
In particular we find 11 admissible functions:
\begin{center}
\begin{tabular}{cc}
\toprule
Support & Values \\
\midrule
$\emptyset$ & $f(G)=0$ for all $G\in\GG$ \\
$\{L_2\}$ & $f(L_2)=1$ \\
$\{L_3\}$ & $f(L_3)=1$ \\
$\{P_1\}$ & $f(P_1)=1$ \\
$\{P_1\}$ & $f(P_1)=2$ \\
$\{P_2\}$ & $f(P_2)=1$ \\
$\{P_2\}$ & $f(P_2)=2$ \\
$\{P_3\}$ & $f(P_3)=1$ \\
$\{P_3\}$ & $f(P_3)=2$ \\
$\{P_4\}$ & $f(P_4)=1$ \\
$\{P_4\}$ & $f(P_4)=2$ \\
\bottomrule
\end{tabular}
\end{center}
\end{exam}


Let us fix some notation. Given a nested set \(\NS\) we know that the intersection $\scap \NS$ is non-empty and it is a layer of type \(\bar{\kk(\Gamma(\NS), \phi_\NS)}\) for some $\Gamma(\NS)$ and $\phi_\NS$. Let $H(\NS)$ be the subtorus associated with $\Gamma(\NS)$ as in Theorem~\ref{thm_layer_chiusura}, namely $H(\NS)=\cap_{\chi\in\Gamma(\NS)}\ker(x_\chi)$.
By Theorem~\ref{thm_layer_chiusura}, given a fan $\Delta$, $\Delta_{H(\NS)}\coloneqq\Delta \cap V_{\Gamma(\NS)}$ is a smooth fan which we denote by \(\subfan{\NS}\). Let \(\XX{\subfan{\NS}}\) be the corresponding toric variety and let \(\pi_\NS\) be the projection \(\pi_\NS \colon H^*(\Xdelta,\ZZ) \to H^*(\XX{\subfan{\NS}},\ZZ)\), which is the restriction map induced by the inclusion, i.e.\ the one of~\eqref{eq_restriction_map}.
Let \(\Theta(\NS)\)  be a minimal set of elements of \(H^*(\Xdelta,\ZZ)\) such that their image via \(\pi_\NS\) is a basis of 
\(H^*(\XX{\subfan{\NS}},\ZZ)\).

Given a (non necessarily admissible) function \(f\colon\GG\to\NN\) we define the monomial in \(H^*(\YY{\GG},\ZZ)\) viewed as $R[T_1,\ldots,T_m]/\II{\GG}$
\[
m_f=\prod_{G_i\in \GG}T_i^{f(G_i)}
\]
where $R=H^*(\Xdelta,\ZZ)$ and $T_i$ is the (class of the) variable associated with $G_i$,
and denote by \(\bb_\GG\) the following set of elements of $H^*(\YY{\GG},\ZZ)$:
\begin{equation}\label{eq:monomialbasis}
\bb_\GG= \{b\,m_f \st f \text{ is admissible}, b \in \Theta({\supp f}) \}.
\end{equation}

\begin{rmk}
For $\NS=\emptyset$, the only admissible function is the zero function. In this case the associated monomial is $1$, and $\Theta(\emptyset)$ is a basis of \(H^*(\Xdelta,\ZZ)\). In fact $\scap \NS=\Xdelta$ by the usual convention, which
is the closure of $\TT$ that, as a layer, has $\Gamma=\{0\}$ and $V_{\Gamma}=V$. So $\Delta(\emptyset)=\Delta$ and $\pi_{\emptyset}$ is the identity function. In particular $\bb_\GG$ contains the set $\{a \cdot 1 \st a\in\Theta(\emptyset) \}$.
\end{rmk}

\begin{thm}\label{teo_main}
Let $\arr$, $\Xdelta$, $\Lambda$ and $\GG$ be as in the beginning of this section. The set \(\bb_\GG\) defined as in~\eqref{eq:monomialbasis} is a \(\ZZ\)-basis of $H^*(\YY{\GG},\ZZ)$.
\end{thm}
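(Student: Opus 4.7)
The plan is to establish (i) that $\bb_\GG$ spans $H^*(\YY{\GG}, \ZZ)$ as a $\ZZ$-module and (ii) that $\card{\bb_\GG}$ equals the $\ZZ$-rank of $H^*(\YY{\GG}, \ZZ)$. Since this cohomology is a free $\ZZ$-module (see the remark after Theorem~\ref{thm_relcoom}), any generating set of the correct cardinality is automatically a $\ZZ$-basis, so the two facts together yield the theorem.

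For the spanning part, I would reduce any monomial $p \cdot m_f$, where $p$ is a monomial in the $C_r$'s and $m_f = \prod_{G \in \GG} T_G^{f(G)}$, to an element of $\bb_\GG$ via successive applications of the relations in Theorem~\ref{thm_relcoom}. First, the relation $C_r T_G = 0$ for $r \notin V_{\Gamma_G}$ implies that any $C_r$ with $r$ outside $V_{\Gamma(\supp f)} = \bigcap_{G \in \supp f} V_{\Gamma_G}$ is annihilated upon multiplication by $m_f$; hence $p \cdot m_f$ depends only on the image of $p$ under the surjection $\pi_{\supp f} \colon H^*(\Xdelta, \ZZ) \to H^*(X_{\subfan{\supp f}}, \ZZ)$ and rewrites as a $\ZZ$-combination of terms $b \cdot m_f$ with $b \in \Theta(\supp f)$. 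Next, I would enforce that $\supp f$ is nested using Proposition~\ref{prop_nested_characterization}: if an antichain $\HH \subseteq \supp f$ has empty intersection, the relation $F(\HH) = 0$ kills $\prod_{K \in \HH} T_K$; if a connected component $G \in \GG$ of $\scap \HH$ exists, then $M = G$ in $F(G, \HH)$ forces $P_G^M = 1$, so $F(G, \HH) = \prod_{K \in \HH} T_K = 0$ as well. Finally, admissibility is enforced via $F(G, \NS_G) = 0$ for each $G \in \NS \coloneqq \supp f$: the leading term in $T_G$ is $\pm T_G^d \prod_{K \in \NS_G} T_K$ with $d = \dim M_\NS(G) - \dim G$, so any $f(G) \geq d$ can be iteratively reduced to a $\ZZ$-combination of monomials with strictly smaller $T_G$-exponent (possibly introducing $T_H$'s for $H \subsetneq G$, $H \in \GG$).

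For the cardinality count, I would use the iterated blowup description of Theorem~\ref{thm:iterblowup}. Fixing a total order $\GG = \{G_1, \ldots, G_m\}$ refining inclusion, each step $X_k = \Bl{\dt{G_k}}{X_{k-1}}$ contributes, via the classical blowup cohomology formula for smooth centres,
\[
H^*(X_k, \ZZ) \cong H^*(X_{k-1}, \ZZ) \oplus \bigoplus_{j=1}^{c_k - 1} T_k^j \cdot H^*(\dt{G_k}, \ZZ),
\]
where $c_k = \codim_{X_{k-1}} \dt{G_k}$. By Li's theorem, $\dt{G_k}$ is itself a wonderful model for the induced (again well-connected) sub-arrangement on $G_k$ coming from $\{G_1, \ldots, G_{k-1}\}$, so by induction on $|\GG|$ the right-hand summand is enumerated by admissible pairs with $G_k \in \supp f$ and $f(G_k) \in \{1, \ldots, c_k - 1\}$, the range matching the admissibility bound $\dim M_{\supp f}(G_k) - \dim G_k - 1$ via the identification of the fibre codimension of the $k$-th blowup with $\dim M_{\NS}(G_k) - \dim G_k$. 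Summing over $k$ and comparing with~\eqref{eq:monomialbasis} gives $\card{\bb_\GG} = \rk H^*(\YY{\GG}, \ZZ)$.

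The main obstacle is the termination of the spanning reduction, since the substitution coming from $F(G, \NS_G) = 0$ introduces $T_H$'s with $H \subsetneq G$, which enlarges the support downward and can reopen both the nestedness and the admissibility question for the newly enlarged support. A carefully chosen well-founded term order --- for instance, lexicographic on total $T$-degree, then on the number of strictly minimal elements of $\supp f$, then on the tuple $(f(G))_{G \in \GG}$ --- is needed to guarantee that each rewriting step strictly decreases, so that the procedure halts at an element of $\bb_\GG$. A related subtlety is identifying $\dt{G_k}$ inside $X_{k-1}$ with the wonderful model of a well-connected sub-arrangement on $G_k$, with matching building/nested structure and admissibility bounds, so that the inductive hypothesis cleanly yields the claimed $\bb_\GG$-count.
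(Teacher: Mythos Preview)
Your overall strategy coincides with the paper's: prove that $\bb_\GG$ spans by rewriting monomials via the relations of Theorem~\ref{thm_relcoom}, then match cardinalities by induction on $|\GG|$ using the blowup cohomology formula, and conclude from freeness. The architecture of both halves is correct, and your treatment of the non-nested case (using $F(\HH)$ for empty intersections and $F(G,\HH)$ with $P_G^G=1$ when a component of $\scap\HH$ lies in $\GG$) is in fact more explicit than the paper's one-line citation.

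The genuine gap is exactly where you flag it: the termination of the spanning reduction. Your proposed order (total $T$-degree, then number of minimal elements of the support, then the exponent tuple) does not work. When you apply $F(B,\NS_B)$ to eliminate $T_B^{\dim M(B)-\dim B}$, the expansion of $\bigl(-\sum_{D\in\GG,\,D\subseteq B}T_D\bigr)^{\dim M(B)-\dim B}$ produces replacement terms of the \emph{same} total $T$-degree in which several new variables $T_D$ with $D\subsetneq B$ appear simultaneously; these $D$'s can be pairwise incomparable, so the number of minimal elements of the support may strictly increase. Neither of your first two keys is guaranteed to drop. The paper's fix is a different invariant: define a pseudo-rank $r(G)$ as the maximal length of a chain from $\emptyset$ to $G$ in $\GG\cup\{\emptyset\}$, call $B\in\supp g$ a \emph{bad component} when $g(B)\ge\dim M(B)-\dim B$, and take the \emph{evaluation} of $g$ to be the lexicographically maximal pair $\bigl(r(B),\,g(B)-(\dim M(B)-\dim B)\bigr)$ over all bad $B$. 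The point is that every new $T_D$ introduced by the reduction has $r(D)<r(B)$, so it cannot create or worsen a bad component of pseudo-rank $\ge r(B)$, while the excess at $B$ itself strictly drops; this is what drives the induction.

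On the cardinality side your sketch is right, but two ingredients you mention only in passing are where the work lies. One must check that the family $\HH$ of connected components of $G_i\cap G_m$ ($i<m$) is again a well-connected building set on $Z=G_m$ with $|\HH|<|\GG|$, and that $\dt{Z}\cong\YY[Z]{\HH}$; the paper imports this from \cite{deconcgaiffi2}. More subtly, the correspondence between $\GG$-admissible $f$ with $f(G_m)=1$ and $\HH$-admissible functions is not established as a bijection directly: the paper only builds an \emph{injection} $L\to\bb_\HH$ via $\bar f(H)\coloneqq f(G_{s(H)})$ (with a separate lemma that $\bar f$ is $\HH$-admissible), deduces $|\bb_\GG|\le\rk H^*(\YY{\GG},\ZZ)$, and then closes the argument using the already-proved spanning. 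So the two halves of the proof are not independent; the inequality from the counting step is upgraded to equality only because spanning is already in hand.
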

\begin{proof}
This proof is divided in two parts: we first show that the elements in 
\(\bb_\GG\) generate $H^*(\YY{\GG},\ZZ)$ as a $\ZZ$-module; then we see that they are independent by counting them.

\noindent\textbf{$\bb_{\GG}$ generates $H^*(\YY{\GG},\ZZ)$ as a $\ZZ$-module}.
First of all notice that the  relations in Theorem~\ref{thm_relcoom} imply that, given an admissible function $f$ and $d \in H^*(\Xdelta,\ZZ)$ such that $\pi_{\supp f}(d)=0$ in 
$H^*(\XX{\subfan{\supp f}},\ZZ)$ then $d\cdot m_{f}=0$ in $H^*(\YY{\GG},\ZZ)$. In fact $d$ is a polynomial in terms of variables $C_{r}$ where $r$ does not belong to $V_{\Gamma(\supp f)}$---this follows from Theorem~\ref{thm_layer_chiusura}.
Therefore we can prove that $\bb_{\GG}$ generates $H^*(\YY{\GG},\ZZ)$ by showing that the set
\[\MM_\GG=\{m_f \st f \text{ is admissible} \}\]
generates $H^*(\YY{\GG},\ZZ)$ as a $H^*(\Xdelta,\ZZ)$-module.
To show this, let us consider a function $g\colon \GG\rightarrow \NN$ which is not admissible: we will prove that $m_{g}$ can be obtained as a 
$H^*(\Xdelta,\ZZ)$-linear combination of monomials in $\MM_{\GG}$.

If $\supp g$ is non-nested then the intersection of the elements in $\supp g$ is empty by Theorem~2.2 of~\cite{deconcgaiffi1}, so $m_g=0$ in $H^*(\YY{\GG},\ZZ)$. From now on we can assume that $\supp g$ is nested, and following the notation introduced before Definition~\ref{def_admissible} we will write $M(B)=M_{\supp g}(B)$.

Let us consider $\GG\cup\{\emptyset\}$ as a set partially ordered by inclusion. 
On this poset we define a ``pseudo-rank'' function $r$ as follows: $r(\emptyset)=0$ and for $G\in \GG$ let $r(G)$ be the maximal length of a chain between $\emptyset$ and $G$ in the Hasse diagram of the poset.

We say that $B\in \supp g$ is a \emph{bad component} for $g$ if $g(B)\geq \dim M(B)-\dim B$. To every bad component $B$ we assign the pair 
\[
\big( r(B),g(B)-(\dim M(B)-\dim B ) \big)\in \NN\times\NN
\] 
and we put on $\NN\times\NN$ the lexicographic order.\footnote{i.e.\ $(a,b)<(c,d)$ if $a<c$ or if $a=c$ and $b<d$.}
Since we assumed that $\supp g$ is nested but that $g$ is not admissible, the set of bad components for $g$ is not empty, so we can define the \emph{evaluation} of $g$ as the maximal pair associated with the bad components of $g$. 
We will proceed by induction on the evaluation.

\noindent
\underline{Base step.} Let us consider a non-admissible function $g$ whose evaluation is $(1,a)$, for some $a\in\NN$. This means that the maximal bad components of $g$ are minimal elements in $\GG$ w.r.t.\  inclusion. Let $B$ be such a bad component. We can partition $\supp g$ as $\NS\cup \NS'\cup \{B\}$ where $\NS:=(\supp g)_B$ and $\NS':=\supp g\setminus (\NS\cup\{B\})$. The monomial $m_g$ associated with $g$ is of the following type:
\[
m_g=\prod_{K\in\NS'}T_K^{g(K)}\prod_{K\in\NS}T_K^{g(K)}\cdot T_B^{g(B)}.
\]
From Theorem~\ref{thm_relcoom} we know that in the ideal $\II{\GG}$ there is the element
\[
F(B,\NS)=P^{M}_B(-T_B)\prod_{K\in \NS}T_K
\]
where $M=M_{\NS}(B)=M_{\supp g}(B)=M(B)$.

Now we notice that $P^{M}_B(-T_B)$ is a polynomial in $H^*(\Xdelta,\ZZ)[T_B]$ of the following form:
\[
\pm (T_B)^{\dim M-\dim B}+ \textrm{lower order terms in }T_B.
\]

By writing $m_g$ as 
\[
m_g=\Big(\prod_{K\in\NS'}T_K^{g(K)}\prod_{K\in\NS}T_K^{g(K)-1}\cdot T_B^{g(B)-(\dim M-\dim B)}\Big)\Big(\prod_{K\in\NS}T_K\cdot T_B^{\dim M-\dim B} \Big),
\]
we see that the leading term of $F(B,\NS)$, namely $T_B^{\dim M-\dim B}\prod_{K\in \NS}T_K$, divides $m_g$, so when we reduce $m_g$ modulo $\II{\GG}$ we obtain a polynomial of the following form:
\begin{align*}
&\Big(\prod_{K\in\NS'}T_K^{g(K)}\prod_{K\in\NS}T_K^{g(K)-1}\cdot T_B^{g(B)-(\dim M-\dim B)}\Big)\Big(\prod_{K\in\NS}T_K\cdot\sum_{k=0}^{\dim M-\dim B-1} c_k\cdot T_B^{k}\Big)\\
=&\sum_{k=0}^{\dim M-\dim B-1} c_k \left(\prod_{K\in\supp g\setminus\{B\}}T_K^{g(K)}\right)\cdot T_B^{g(B)-(\dim M-\dim B)+k}=\sum_{k=0}^{\dim M-\dim B-1} c_k m_{g_k}
\end{align*}
for some $c_k\in H^*(\Xdelta,\ZZ)$ and some suitable $g_k\colon \GG\to \NN$. Notice that $\supp g_k=\supp g$ (eventually without $B$) and that the $g_k$'s coincide with $g$ in $\supp g\setminus\{B\}$, whereas
\[
g_k(B)=g(B)-(\dim M-\dim B)+k.
\]

Therefore every monomial $m_{g_k}$ appearing in this formula either is in $\bb_{\GG}$ or its evaluation is $(1,b)$ with $b=a+k-(\dim M-\dim B)<a$. 
We can apply the same argument to the latter monomials until we get a linear combination of monomials in $\bb_{\GG}$.

\noindent
\underline{Inductive step.} Let us suppose that our claim is true for non-admissible functions (with nested support) whose evaluation is $(k,c)$ with $k>1$ and $c\in\NN$. Let us consider a non-admissible function $g$ with evaluation $(k+1,a)$. This means that there is at least one bad component $B$ whose  associated pair is $(k+1,a)$.

As before we consider the element of $\II{\GG}$:
\[
F(B,\NS)=P^{M}_B(\sum_{D\in \GG, D\subseteq B}-T_D)\prod_{K\in \NS}T_K
\]
where $\NS=(\supp g)_B$ and $M=M_{\NS}(B)=M_{\supp g}(B)=M(B)$.
The polynomial $P^{M}_B(\sum -T_D)$ is of type
\[
\pm (T_B)^{\dim M-\dim B}+ q,
\]
where $q$ is a polynomial in $H^*(\Xdelta,\ZZ)[T_{D}\st  D\in \GG, D\subseteq B]$ with degree in $T_B$ strictly less than $\dim M-\dim B$.

As in the base step, we notice that the leading term of $F(B,\NS)$ divides $m_g$ and this allows us to write $m_g$ modulo $\II{\GG}$ as a $H^*(\Xdelta,\ZZ)$-linear combination of monomials. If these are not $0$ modulo $\II{\GG}$ either they are in $\bb_{\GG}$ or their evaluation is strictly less than $(k+1,a)$.
If some of them have evaluation $(k+1,b)$ with $b<a$
 we can again use the relations in $\II{\GG}$ and in a finite number of steps we get a $H^*(\Xdelta,\ZZ)$-linear combination of monomials that are in $\bb_{\GG}$ or have evaluation $(s,t)$ with $s\leq k$. To these latter monomials we can apply  the inductive hypothesis.

\noindent
\textbf{$\bb_{\GG}$ is a set of independent elements}.
To show that the elements in $\bb_{\GG}$ are linearly independent over $\ZZ$ it suffices to show that $\card{\bb_{\GG}}$ is equal to the rank of $H^*(\YY{\GG},\ZZ)$ which is a free $\ZZ$-module (see Theorem~\ref{thm_relcoom}).
We proceed by induction on $m$, the cardinality of ${\GG}$.

\noindent
\underline{Base step.} If $m=1$ then $\GG=\{G_1\}$ and $\YY{\GG}$ is the blowup of $\Xdelta$ along $G_1$. As it is well known (see for example~\cite[Chapter~4, Section~6]{griffithsharris}) we have the following isomorphism of graded $\ZZ$-modules:
\[
H^*(\YY{\GG},\ZZ)\cong H^*(\Xdelta,\ZZ)\oplus\bigoplus_{J=1}^{\codim G_1-1}H^*(G_1,\ZZ)\zeta^J
\]
where $\zeta^J$ is a symbol that shifts the degrees of $+2J$.
We now split $\bb_{\GG}$ into two disjoint subsets:
\[
\bb_{\GG}^1:=\{a\cdot 1\st a \in \Theta({\emptyset})\}
\]
\[
\bb_{\GG}^2:=\{b\,T_{1}^r\st r=1,\ldots,\codim G_1-1\text{ and }b \in \Theta(\{G_1\})\}
\]
We observe that by Theorem~\ref{thm_layer_chiusura}, point~\ref{thm_layer_chiusura_iii},
$G_1$ is isomorphic to $\XX{\subfan{\{G_1\}}}$.
Therefore there is a grade-preserving bijection between $\bb_{\GG}^2$ and a basis of $\bigoplus H^*(G_1,\ZZ)\zeta^J$ which, together with the known bijection between $\bb_{\GG}^1$ and a basis of $H^*(\Xdelta,\ZZ)$, proves our claim in this case.

\noindent
\underline{Inductive step.}
We assume that the claim holds for every toric model associated with a building set of cardinality less than or equal to $m-1$. Consider $\GG=\{G_1,\ldots,G_m\}$ and assume that the labelling is a refinement of the ordering in $\GG$ by inclusion. 
We put $\GG_{i}:=\{G_1,\ldots,G_i\}$ for every $i\leq m$ and $Z:=G_{m}$; we denote then $\dt{Z}$ the proper 
transform of $Z$ in the variety $\YY{\GG_{m-1}}$. Then $\YY{\GG}$ is obtained as the blow up of $\YY{\GG_{m-1}}$ along $\dt{Z}$.

We can use again the result from~\cite{griffithsharris} and obtain the following graded isomorphism of $\ZZ$-modules:
\[
H^*(\YY{\GG},\ZZ)\cong H^*(\YY{\GG_{m-1}},\ZZ)\oplus \bigoplus_{J=1}^{\codim G_m-1}H^*(\dt{Z},\ZZ)\zeta^{J}.
\]
Following the idea from the base step, we write $\bb_{\GG}$ as the union of the disjoint sets:
\begin{align*}
\bb_{\GG}^1&{}=\{b\,m_f \st f\text{ admissible, }f(G_m)=0,\ b \in \Theta(\supp f)\},\\
\bb_{\GG}^2&{}=\{b\,m_f\st f\text{ admissible, }f(G_m)\neq0,\  b \in \Theta(\supp f)\}.
\end{align*}

There is a bijective correspondence, provided by the restriction, between the set $\{f\colon\GG\to\NN\st f \text{ admissible, }f(G_m)=0\}$
and the set $\{f\colon\GG_{m-1}\to\NN\st f\text{ admissible}\}$.
By the inductive hypothesis, $\bb_{\GG}^1$ is in bijection with the basis $\bb_{\GG_{m-1}}$ of $H^*(\YY{\GG_{m-1}},\ZZ)$ and this correspondence is grade-preserving.

Notice that because $G_m$ is maximal in $\GG$, then $G_m$ is maximal in $\supp f$ for every admissible function $f$ such that $f(G_m)\neq0$. So the possible values for $f(G_m)$ are $1,\ldots, \codim G_m-1$.

Now we observe that, given an element $b\,m_f\in\bb_{\GG}^2$ (so that $f(G_m)\neq0$), we also have in $\bb_{\GG}^2$ the monomials $b\,m_g$ for all the admissible functions $g$ that coincide with $f$ on $\GG\setminus \{G_m\}$ and such that $g(G_m)\in \{1,\ldots,\codim G_m-1\}\setminus \{f(G_m)\}$.
As a consequence the sets $\{b\,m_f\in\bb_{\GG}^2\st f(G_m)=i\}$, for $i=1,\ldots, \codim G_{m}-1$ have all the same cardinality and form a partition of $\bb_{\GG}^2$, and it suffices to prove that there is a grade-preserving (up to a shift by 2 in cohomology) bijection between $\{b\,m_f\in\bb_{\GG}^2\st f(G_m)=1\}$ and a basis of $H^*(\dt{Z},\ZZ)$. This extends to a grade-preserving bijection between $\bb_{\GG}^2$ and a basis of $\bigoplus H^*(\dt{Z},\ZZ)\zeta^J$.
 
Let us now recall the following result and notation from~\cite[Section~4]{deconcgaiffi2}.
We consider the family $\HH$  of subvarieties in $Z$ that are the connected components of the intersections $G_i\cap Z$ for every $i=1,\ldots,m-1$. Since $\GG$ is well-connected, if $G_i\cap Z$ is not empty and not connected then its connected components belong to $\GG$. This implies that $u:=\card{\HH}\leq m-1$. Now for each $H\in\HH$, we denote by $s(H)$ the minimum index $i$ such that $H$ is a connected component of $G_{i}\cap Z$ (in particular $H=G_{s(H)}\cap Z$). We sort the set $\{s(H)\st H\in\HH \}$ in ascending order as $\{s_{1},\ldots,s_{u}\}$ and let $\HH=\{H_1,\ldots,H_{u}\}$.
\begin{rmk}
Notice that two possibility occurs for $H\in \HH$: either $H=G_{s(H)}$ in case $G_{s(H)}\subset Z$, or $H=G_{s(H)}\cap Z$ and the intersection is transversal.
\end{rmk}

In~\cite[Proposition~4.4]{deconcgaiffi2} it is proven that $\HH$ is building and well-connected and from Proposition~4.6 of the same paper it follows that $\dt{Z}$ is isomorphic to the model $\YY[Z]{\HH}$ obtained by blowing up $\HH$ in $Z$.
From Theorem~\ref{thm_layer_chiusura}, point~\ref{thm_layer_chiusura_ii}, we know that $Z=G_m$ is a toric variety with fan $\Delta'=\subfan{\{G_m\}}$, so $Z=\XX{\Delta'}$.

In analogy with the previous notation, for an $\HH$-nested $\NS$ let $\pi'_{\NS}$ be the projection
\(\pi'_\NS \colon H^*(\XX{\Delta'},\ZZ) \to H^*(\XX{\Delta'(\NS)},\ZZ)\)
and we take $\Theta'(\NS)$ as a minimal set of elements of \(H^*(\XX{\Delta'},\ZZ)\) such that their image via \(\pi'_\NS\) is a basis of 
\(H^*(\XX{\Delta'(\NS)},\ZZ)\).

Since $\card{\HH}< m$ we can apply our inductive hypothesis to $\YY[Z]{\HH}$ and we get the following $\ZZ$-basis $\bb_{\HH}$ of $H^*(\YY[Z]{\HH},\ZZ)$:
\[
\bb_{\HH}=\{b'\,m_g\st\ g \ \HH \text{-admissible},\ b'\in \Theta'(\supp g)\}.
\]
We are now ready to describe a bijective, grade-preserving (up to a shift by 2 in cohomology) correspondence between $\bb_{\HH}$ and
\[
L:=\{b\,m_f\st f\ \GG\text{-admissible},\ f(G_m)=1,\ b \in \Theta(\supp f)\}\subset \bb_{\GG}^2.
\]

Let $f\colon \GG\to \NN$ be a $\GG$-admissible function with $f(G_m)=1$. We associate to $f$ the function $\bar{f}\colon\HH\to\NN$ such that $\bar{f}(H)=f(G_{s(H)})$ for all $H\in \HH$. 
\begin{lemma}
The function $\bar{f}$ is $\HH$-admissible.
\end{lemma}
\begin{proof}
If $\supp \bar{f}$ is empty, $\bar{f}$ is admissible by definition,
so from now we suppose that $\supp \bar{f}\neq \emptyset$.

We show that $\supp \bar{f}$ is $\HH$-nested by using the characterization from Proposition~\ref{prop_nested_characterization}:
we take an antichain $\mathcal{K}=\{K_1,\ldots, K_{t}\}$
($t\geq 2$) in $\supp \bar{f}$ and prove that the intersection $\scap \mathcal{K} $
is connected, transversal and does not belong to $\HH$.

\noindent
\underline{$\scap \mathcal{K}$ is connected.} It is equal to $G_{s(K_1)}\cap\dotsb\cap G_{s(K_t)}\cap G_m$ and they all belong to $\supp f$ which is $\GG$-nested. In particular, their intersection is the intersection of the minimal elements among them, that is to say, the intersection of an antichain of a nested set, which is connected by Proposition~\ref{prop_nested_characterization}.

\noindent
\underline{$\scap \mathcal{K}$ is transversal.} We split this part of the proof in two cases:
\begin{enumerate}
\item if there is a $j\in\{1,\ldots,t\}$ such that $K_j=G_{s(K_j)}$, 
then 
\[
K_1\cap\dotsb\cap K_j\cap\dotsb\cap K_t=G_m\cap G_{s(K_1)}\cap\dotsb\cap G_{s(K_t)}=G_{s(K_1)}\cap\dotsb\cap G_{s(K_t)}
\]
which is transversal because $\{G_{s(K_1)},\ldots,G_{s(K_t)}\}$ is a nested set;
\item if $K_j=G_m\cap G_{s(K_j)}$ transversally for every $j\in \{1,\ldots,t\}$, then the set $\{G_{s(K_1)},\ldots, G_{s(K_t)},G_m\}$ is a set of elements which are pairwise non-comparable (the $G_{s(K_i)}$'s are not pairwise comparable because the $K_i$'s are not), so their intersection 
\[
G_m\cap G_{s(K_1)}\cap\dotsb\cap G_{s(K_t)}=\scap \mathcal{K}
\]
 is transversal.

\end{enumerate}

\noindent
\underline{$\scap \mathcal{K}$ does not belong to $\HH$.}
Suppose that $\scap \mathcal{K}=H\in\HH$. It is not possible that $H=G_{s(H)}$, because otherwise $\scap \mathcal{K}$ would belong to $\GG$ in contradiction with Proposition~\ref{prop_nested_characterization}. On the other hand, if $H=G_{s(H)}\cap G_m$ transversally, as a consequence of~\cite[Proposition~3.3]{deconcgaiffi2} we have that $G_{s(H)}$ and $G_m$ are the minimal elements among the ones in $\GG$ containing $H$. We study the following two subcases:
\begin{itemize}
\item if there is a $j\in\{1,\ldots,t\}$ such that $K_j=G_{s(K_j)}$, then we would have $H\subset G_{s(K_j)}\subset G_m$ in contradiction with the minimality of $G_m$;
\item if $K_j=G_m\cap G_{s(K_j)}$ transversally for every $j\in \{1,\ldots,t\}$, since $k\geq 2$ we would have two different $\GG$-decompositions of $H$, namely
\[
H=G_m\cap G_{s(K_j)}=G_m\cap G_{s(K_1)}\cap\dotsb\cap G_{s(K_t)}.
\]
\end{itemize} 
This concludes the proof that $\supp \bar{f}$ is nested.

Now let us consider $H\in\supp \bar{f}$. We study the following two cases:

\noindent
\underline{Case $H=G_{s(H)}$.}
In this case $\bar{f}(H)=f(G_{s(H)})$ and, since $f$ is $\GG$-admissible, we have 
\[
1\leq f(G_{s(H)})<\dim M_{\supp f}(G_{s(H)})-\dim G_{s(H)}.
\] 
Now, because $G_m\in \{B\in\supp f\st  G_{s(H)}\subsetneq  B\}$ we notice that 
\[
\scap (\supp f)_{G_{s(H)}}=\scap (\supp \bar{f})_{H} 
\]
where $\supp \bar{f}$ is viewed as a $\HH$-nested set and
with the usual convention that if $ \{H_i\in\supp \bar{f}\st  H\subsetneq  H_i\}$ is empty the intersection is the ambient space $G_m$. In particular
\[
\dim \big(M_{\supp f}(G_{s(H)})\big)=\dim \big( M_{\supp \bar{f}}(H)\big),
\]
therefore $\bar{f}(H)$ ranges in the expected interval.

\noindent
\underline{Case $H=G_{s(H)}\cap G_m$ transversally.}
In this case $\bar{f}(H)=f(G_{s(H)})$ where again 
\[
1\leq f(G_{s(H)})<\dim M_{\supp f}(G_{s(H)})-\dim G_{s(H)}.
\] 
Now since $\supp f$ is $\GG$-nested and $G_m$ does not belong to $\{B\in\supp f\st  G_{s(H)}\subsetneq  B\}$ we have that
\begin{equation}\label{Eq:uguaglianza_M}
\dim \left(M_{\supp f}(G_{s(H)}) \cap G_m\right)=\dim \left( M_{\supp f}(G_{s(H)})\right)  -\codim G_m.
\end{equation}
But 
\[
\Big(\scap(\supp f)_{G_{s(H)}} \Big) \cap G_m=\scap(\supp \bar{f})_{H} 
\]
so we can rewrite \eqref{Eq:uguaglianza_M} as
\[
\dim M_{\supp \bar{f}}(H)=\dim \left( M_{\supp f}(G_{s(H)})\right)  -\codim G_m
\]
and, observing that $\dim H=\dim G_{s(H)}-\codim G_m$, we conclude that 
\begin{align*}
\dim M_{\supp f}(G_{s(H)})-\dim G_{s(H)}
&{}=\dim M_{\supp \bar{f}}(H)+\codim G_m-\dim G_{s(H)}\\
&{}=\dim M_{\supp \bar{f}}(H)-\dim H
\end{align*}
therefore also in this case $\bar{f}(H)$ ranges in the expected interval.
\end{proof}

\begin{lemma}\label{lemma_supporto_f}
If $f$ is $\GG$-admissible and $f(G_m)=1$, then $\supp f\setminus\{G_m\}\subseteq \{G_{s(H)}\st H\in \HH\}$.
\end{lemma}
\begin{proof}
Let us suppose $B=G_{k}\in \supp f\setminus\{G_m\}$ with $k\neq s(H)$ for every $H\in \HH$. We notice that $B\nsubseteq G_m$ otherwise $B=B\cap G_m\in \HH$ so $B=G_{s(B)}$. Moreover, since $\supp f$ is nested and contains both $B$ and $G_m$ it follows that $B\cap G_m\neq \emptyset$ and connected by Proposition~\ref{prop_nested_characterization}. 
Now we observe that from the connectedness of $B\cap G_m$ and the definition of $\HH$ we have that $B\cap G_m=H_j$ for some $j$ and that $B$ and $G_m$ are its $\GG$-factors. But $H_j=G_{s_j}\cap G_m$ is a different $\GG$-factorization of $H_j$, obtaining a contradiction.  
\end{proof}

Therefore, given $f$ $\GG$-admissible with $f(G_m)=1$ we can associate two monomials: $m_f$ and $m_{\bar{f}}$. Now in $\bb_{\GG}$ we find elements of the form $b\,m_{f}$ with $b$ belonging to $\Theta(\supp f)$; on the other hand in $\bb_{\HH}$ we find elements of the form $b'\,m_{\bar{f}}$ with $b'$ belonging to $\Theta'(\supp \bar{f})$.
But $\Delta'(\supp \bar{f})=\subfan{\supp f}$, therefore we can choose $b$ and $b'$ above so that they range over the same set.

We have thus constructed a map $\Phi\colon L \to\bb_{\HH}$ such that
$\Phi(b\,m_f)=b\,m_{\bar{f}}$. If we show that $\Phi$ is a bijection, this concludes the proof of the theorem. Actually it is sufficient to prove that $\Phi$ is injective: in fact the injectivity implies
\[
\card{L}\leq \card{\bb_{\HH}}= \rk(H^*(\dt{Z},\ZZ))
\]
 where the last equality, as we have seen, derives from the inductive hypothesis, since $\bb_{\HH}$ is a basis of $H^*(\dt{Z},\ZZ)$. This in turn implies that 
 \begin{align*}
 \card{\bb_{\GG}}&{}=\card{\bb_{\GG}^1}+\card{\bb_{\GG}^2}=\card{\bb_{\GG}^1}+(\codim G_m-1)\card{L}=\\
 &{}=\rk H^*(\YY{\GG_{m-1}},\ZZ)+(\codim G_m-1)\card{L}\\
 &{}\leq \rk H^*(\YY{\GG_{m-1}},\ZZ)+(\codim G_m-1)\rk(H^*(\dt{Z},\ZZ))
 =\rk H^*(\YY{\GG},\ZZ).
 \end{align*}
 On the other hand we already know, from the first part of this proof, that $\bb_{\GG}$ generates $H^*(\YY{\GG},\ZZ)$. It follows that 
\[
\card{\bb_{\GG}}=\rk H^*(\YY{\GG},\ZZ)
\] 
which is the claim of the theorem (and of course this also implies that $\Phi$ is actually a bijection). 

To show the injectivity of $\Phi$ let $f_1$, $f_2$ be two distinct $\GG$-admissible functions with $f_1(G_m)=f_2(G_m)=1$: we prove that $\bar{f_1}\neq \bar{f_2}$.

Let us first suppose that $\supp f_1\neq \supp f_2 $; up to switching $f_1$ and $f_2$, we can assume that there exists $B\in \supp f_1\setminus \supp f_2$. By Lemma~\ref{lemma_supporto_f} we know that $B=G_{s(H)}$ for a certain $H\in \HH$. We deduce that $H\in \supp \bar{f_1}\setminus \supp \bar{f_2}$ and conclude that $\bar{f_1}\neq \bar{f_2}$.

If instead $\supp f_1=\supp f_2$, $f_1\neq f_2$ implies that there is a certain $B$ in their support such that $f_1(B)\neq f_2(B)$.
Again by Lemma~\ref{lemma_supporto_f} we know that $B=G_{s(H)}$, $H\in \HH$, therefore $\bar{f_1}(H)=f_1(G_{s(H)})\neq f_2(G_{s(H)})=\bar{f_2}(H)$. This proves that $\Phi$ is injective and concludes the proof of the theorem.
\end{proof}

\begin{exam}[Example~\ref{ex_arrangement}, continued]\label{ex_arrangement_basis}
We can compute a $\ZZ$-basis for the ring $H^*(Y_\arr,\ZZ)$ using the admissible functions found in Example~\ref{ex_arrangement_admissible}. The result is detailed in Tables~\ref{tab_example_basis_1} and~\ref{tab_example_basis_2}; the tables have one line for each possible support $\NS$ of admissible functions, as listed in~\eqref{eq_ex_arrangement_supports}.
\begin{table}[!htb]
\caption{Data used to build the basis of $H^*(Y_\arr,\ZZ)$.}
\label{tab_example_basis_1}
\centering
\begin{tabular}{ccc}
\toprule
\multirow{2}{*}{$\NS$} & Basis for & Monomials $m_f$ \\
& $H^*(\XX{\subfan{\NS}},\ZZ)$ & with $f$ s.t.\ $\supp f=\NS$  \\
\midrule
$\emptyset$ & Basis of $H^*(\Xdelta,\ZZ)$ & $\{1\}$ \\
$\{L_2\}$ & $\{C_7,1\}$ & $\{T_4\}$ \\
$\{L_3\}$ & $\{C_7,1\}$ & $\{T_5\}$ \\
$\{P_1\}$ & $\{1\}$ & $\{T_6,T_6^2\}$ \\
$\{P_2\}$ & $\{1\}$ & $\{T_8,T_8^2\}$ \\
$\{P_3\}$ & $\{1\}$ & $\{T_7,T_7^2\}$ \\
$\{P_4\}$ & $\{1\}$ & $\{T_9,T_9^2\}$ \\
\bottomrule
\end{tabular}
\end{table}

\begin{table}[!htb]
\caption{Contribution to the basis of $H^*(Y_\arr,\ZZ)$ and to the Betti numbers of $Y_\arr$.}
\label{tab_example_basis_2}
\centering
\begin{tabular}{cccccc}
\toprule
\multirow{2}{*}{$\NS$} & Contribution to & \multicolumn{4}{c}{Contribution to $\rk(H^i(Y_\arr,\ZZ))$} \\
& the basis $\bb_\GG$ & $i=0$ & $i=2$ & $i=4$ & $i=6$ \\
\midrule
$\emptyset$ &  Basis of $H^*(\Xdelta,\ZZ)$ & 1 & 69 & 69 & 1 \\
$\{L_2\}$ & $\{C_7T_4,T_4\}$ & 0 & 1 & 1 & 0 \\
$\{L_3\}$ & $\{C_7T_5,T_5\}$ & 0 & 1 & 1 & 0 \\
$\{P_1\}$ & $\{T_6,T_6^2\}$ & 0 & 1 & 1 & 0 \\
$\{P_2\}$ & $\{T_8,T_8^2\}$ & 0 & 1 & 1 & 0 \\
$\{P_3\}$ & $\{T_7,T_7^2\}$ & 0 & 1 & 1 & 0 \\
$\{P_4\}$ & $\{T_9,T_9^2\}$ & 0 & 1 & 1 & 0 \\
\midrule
& $\bb_\GG$ & 1 & 75 & 75 & 1 \\
\bottomrule
\end{tabular}
\end{table}
\end{exam}

\begin{exam}\label{ex_arr2}
As we have seen in Example~\ref{ex_arrangement_admissible}, not all the nested sets are supports of admissible functions. In particular, for small toric arrangements in low dimensions admissible functions often are supported only on singletons. In this example we study a case where there are supports of admissible functions with cardinality 2.

Let $\arr=\{\kk_1,\kk_2,\kk_3\}$ be the arrangement in $(\CC^*)^4$, with coordinates $(x,y,z,t)$, whose layers are defined by the equations
\[
\kk_1\colon z=t=1,\qquad
\kk_2\colon y=1,\qquad
\kk_3\colon x=1.
\]
The poset of layers $\play(\arr)$ is represented in Figure~\ref{fig_arr2_poset}. In this example $\Xdelta=(\PP^1)^4$ is a good toric variety for the arrangement, so we can use its associated fan $\Delta$ (recall that its 16 maximal cones are $C(\sigma_1e_1,\sigma_2e_2,\sigma_3e_3,\sigma_4e_4)$ where $e_1,\dotsc,e_4$ are the vectors of the canonical basis of $\CC^4$ and $(\sigma_1,\sigma_2,\sigma_3,\sigma_4)\in\{\pm1\}^4$); moreover we choose $\GG=\play_0(\arr)$ and build the model $Y_\arr=\YY{\GG}$.
\begin{figure}[!htb]
\centering
\begin{tikzpicture}[baseline=(ref.base)]
\def\hunit{1}
\def\vunit{1}
\tikzset{poset element/.style={draw,circle,fill=white,inner sep=1pt,minimum size=5mm,font=\scriptsize}}
\tikzset{building element/.style={poset element,double}}
\node (ref) at (0*\hunit,2*\vunit) {\phantom{$\kk_1$}};
\node[poset element] (v0) at (0*\hunit,0*\vunit) {$\TT$};
\node[poset element] (v1) at (-1.5*\hunit,2*\vunit) {$\kk_1$};
\node[poset element] (v2) at (0*\hunit,1*\vunit) {$\kk_2$};
\node[poset element] (v3) at (1*\hunit,1*\vunit) {$\kk_3$};
\node[poset element] (v4) at (-1*\hunit,3*\vunit) {$M_1$};
\node[poset element] (v5) at (0*\hunit,3*\vunit) {$M_2$};
\node[poset element] (v6) at (1.5*\hunit,2*\vunit) {$L$};
\node[poset element] (v7) at (0*\hunit,4*\vunit) {$P$};
\draw[->] (v0) -- (v1);
\draw[->] (v0) -- (v2);
\draw[->] (v0) -- (v3);
\draw[->] (v1) -- (v4);
\draw[->] (v1) -- (v5);
\draw[->] (v2) -- (v4);
\draw[->] (v2) -- (v6);
\draw[->] (v4) -- (v7);
\draw[->] (v3) -- (v5);
\draw[->] (v3) -- (v6);
\draw[->] (v5) -- (v7);
\draw[->] (v6) -- (v7);
\end{tikzpicture}\hspace{1cm}
\begin{tabular}{c@{${}\rightsquigarrow{}$}c}
$\kk_1$ & $T_1$ \\
$\kk_2$ & $T_2$ \\
$\kk_3$ & $T_3$ \\
$M_1$ & $T_4$ \\
$M_2$ & $T_5$ \\
$L$ & $T_6$ \\
$P$ & $T_7$
\end{tabular}
\caption{Poset of layers $\play(\arr)$ for the arrangement of Example~\ref{ex_arr2}. Elements at the same height have the same codimension. On the right: map from elements of $\GG=\play_0(\arr)$ to the corresponding variables $T_i$ in $H^*(\Xdelta,\ZZ)[T_1,\dotsc,T_7]$.}
\label{fig_arr2_poset}
\end{figure}

The possible non-empty supports of admissible functions are
\begin{itemize}
\item $\{L\}$: it supports one admissible function $f$ such that $f(L)=1$;
\item $\{M_1\}$, $\{M_2\}$: each supports two admissible functions, namely $f(M_i)=1$ and $f(M_i)=2$;
\item $\{P\}$: it supports three admissible functions, where $f(P)$ is either $1$, $2$ or $3$;
\item $\{L,P\}$: it supports one admissible function such that $f(L)=1$ and $f(P)=1$.
\end{itemize}
Table~\ref{tab_arr2_basis} details the contribution to the Betti numbers for each admissible function.
\begin{table}[!htb]
\caption{Contribution to the Betti numbers of $H^*(Y_\arr,\ZZ)$ for each admissible function, grouped by support.}
\label{tab_arr2_basis}
\footnotesize
\centering
\begin{adjustbox}{max width=1.2\textwidth,center}
\begin{tabular}{cccccccc}
\toprule
\multirow{2}{*}{$\NS$} & Betti numbers & Monomials $m_f$ with $f$ & \multicolumn{5}{c}{Contribution to $\rk(H^i(Y_\arr,\ZZ))$} \\
& for $H^*(\XX{\subfan{\NS}},\ZZ)$ & s.t.\ $\supp f=\NS$ & $i=0$ & $i=2$ & $i=4$ & $i=6$ & $i=8$ \\
\midrule
$\emptyset$ & 1, 4, 6, 4, 1 & $\{1\}$ & 1 & 4 & 6 & 4 & 1 \\
$\{L\}$ & 1, 2, 1 & $\{T_6\}$ & 0 & 1 & 2 & 1 & 0 \\
$\{M_1\}$ & 1, 1 & $\{T_4,T_4^2\}$ & 0 & 1 & 2 & 1 & 0 \\
$\{M_2\}$ & 1, 1 & $\{T_5,T_5^2\}$ & 0 & 1 & 2 & 1 & 0 \\
$\{P\}$ & 1 & $\{T_7,T_7^2,T_7^3\}$ & 0 & 1 & 1 & 1 & 0 \\
$\{L,P\}$ & 1 & $\{T_6 T_7\}$ & 0 & 0 & 1 & 0 & 0 \\
\bottomrule
\end{tabular}
\end{adjustbox}
\end{table}
\end{exam}

\section{The case of root systems of type \texorpdfstring{$A$}{A}}\label{Sec:esempioAn}
In this section we will apply our main theorem to the case of the toric arrangement associated with a root system of type $A_{n-1}$.

\subsection{The minimal toric model and its cohomology basis}\label{subsec:AnBasis}
The toric analogue of the hyperplane arrangement of type $A_{n-1}$ is $\arrA{n-1}=\{\kk_{ij}\st{1\leq i<j\leq n}\}$ in $\TT=(\mathbb{C}^*)^n/H_n\simeq(\CC^*)^{n-1}$, where $H_n$ is the $\CC^*$-span of $(1,\dotsc,1)$ and
\[
\kk_{ij}:=\{[t_1,\ldots,t_n]\in \TT\st t_it_j^{-1}=1\}.
\]
Its poset of intersections $\play(\arrA{n-1})$ is isomorphic to the poset of partitions of the set $\{1,\ldots,n\}$ ordered by refiniment. More precisely, the partition $\{I_1,\ldots,I_k\}$ with $I_1\sqcup\dotsb \sqcup I_k=\{1,\ldots,n\}$ corresponds to the layer
\[
\kk_{I_1,\ldots,I_k}:=\{[t_1,\ldots,t_n]\in \TT\st t_i=t_j\text{ if }\exists\;l\text{ such that }i,j\in I_l\},
\]
where for the sake of convenience we will sometimes omit to write the blocks $I_j$ with cardinality one.

Let $\FFA{n-1}\subset \play(\arrA{n-1})$ be the set whose elements are the $\kk_{I}$ for every $I\subset \{1,\ldots,n\}$ with $\card{I}\geq 2$. It is a building set, in fact it is the minimal one that contains the layers $\kk_{ij}$; it is the analogue of the ``building set of irreducible elements'' for the linear case (see~\cite{wonderful1,yuzvinsky, gaiffiselecta}).

For the arrangement $\arrA{n-1}$ there is a natural choice of a fan that produces a good toric variety, as noted in~\cite{deconcgaiffi1}: we take in $V=X_*(\TT)\otimes_{\ZZ}\RR$ the fan $\DeltaA{n-1}$ induced by the Weyl chambers of the root system. By construction every layer of $\play(\arrA{n-1})$ has an equal sign basis with respect to $\DeltaA{n-1}$, so the toric variety $\XdeltaA{n-1}$ associated with $\DeltaA{n-1}$ is a good toric variety for the arrangement.

In~\cite{procesi90} Procesi studies this toric variety (and also the more general toric varieties $X_{W}$ associated with the fan induced by the Weyl chambers of a Weyl group $W$; see also~\cite{dolgachevlunts}). As it is well-known, the even Betti numbers of the toric variety $\XdeltaA{n-1}$ are the Eulerian numbers $A(n,k)$ (see, for example,~\cite{stembridge92,stanley,stanley-enum1}). We recall briefly the main definitions and results about the numbers $A(n,k)$ and the cohomology of $\XdeltaA{n-1}$.
\begin{defn}
The \emph{Eulerian number} $A(n,k+1)$ is the number of permutations in $S_n$ with $k$ descents\footnote{If $\sigma$ is a permutation in $S_n$, a \emph{descent} of $\sigma$ is an index $i\in\{1,\dotsc,n-1\}$ such that $\sigma(i)>\sigma(i+1)$. The number of descents of $\sigma$ is denoted by $\des(\sigma)$.} for $n\geq 1$ and $0\leq k\leq n-1$.
\end{defn}
Following~\cite{comtet} we present the Eulerian polynomial $A_n(q)$ as
\[
A_n(q)=\begin{cases}
{\displaystyle\sum_{k=1}^{n}A(n,k)q^k,}& n\geq 1,\\
1,&n=0.
\end{cases}
\]
According to the above formula one can compute the first Eulerian polynomials obtaining $A_1(q)=q$, $A_2(q)=q+q^2$, $A_3(q)=q+4q^2+q^3$.
The exponential generating function of the Eulerian polynomials is (see for instance~\cite[Section~6.5]{comtet}):
\begin{equation}\label{eq_eulerian_egf}
\sum_{n\geq 0}A_n(q)\frac{t^n}{n!}=\frac{1-q}{1-qe^{t(1-q)}}.
\end{equation}

The dimension of $H^{2k}(\XdeltaA{n-1})$ is $A(n,k+1)$ (see~\cite[Section 4]{stembridge92}), so the Poincaré polynomial of $\XdeltaA{n-1}$, written following the convention that $\deg q=2$, is
\[
P(\XdeltaA{n-1},q)=\sum_{k=0}^{n-1}A(n,k+1)q^{k}=\frac{1}{q}A_n(q).
\]

From Theorem~\ref{teo_main} we know that a basis for the cohomology of $\YYAT{n-1}\coloneqq\YY[\XdeltaA{n-1}]{\FFA{n-1}}$ is given by the elements of $\bb_{\FFA{n-1}}$. Recall that these elements are products of the form $b\,m_f$, where $f$ is an admissible function and $b\in \Theta(\supp f)$; we are going to study these two factors in this case.

\noindent\underline{Monomial $m_f$.} In analogy with~\cite{gaiffiselecta} and~\cite{yuzvinsky}, we can associate in a natural way an admissible function $f\colon\FFA{n-1}\to\NN$ with a so-called \emph{admissible forest} (on $n$ leaves).

\begin{defn}
An \emph{admissible tree} on $m$ leaves is a labeled directed rooted tree such that
\begin{itemize}
\item it has $m$ leaves, each labeled with a distinct non-zero natural number;
\item each non-leaf vertex $v$ has $k_v\geq 3$ outgoing edges, and it is labeled with the symbol $q^i$ where $i\in\{1,\dotsc,k_v-2\}$.
\end{itemize}
By convention, the graph with one vertex and no edges is an admissible tree on one leaf (actually the only one). The \emph{degree} of an admissible tree is the sum of the exponents of the labels of the non-leaf vertices.
\end{defn}

\begin{defn}
An \emph{admissible forest} on $n$ leaves is the disjoint union of admissible trees such that the sets of labels of their leaves form a partition of $\{1,\dotsc,n\}$. The \emph{degree} of an admissible forest is the sum of the degrees of its connected components.
\end{defn}


As illustrated by  Example~\ref{ex:monomieforeste}, the association between admissible forests and functions is the following: given an admissible forest $F$, for each internal vertex $v$ let $I(v)$ be the set of labels of the leaves that descend from $v$ and let $i(v)$ be such that $q^{i(v)}$ is the label of $v$; then the admissible function $f$ associated with $F$ has $\supp f=\{\kk_{I(v)}\st v\textrm{ internal vertex of }F\}$ and, for each $v$, $f\colon\kk_{I(v)}\mapsto i(v)$.

\begin{rmk}
The degree of an admissible forest associated with the function $f$ is equal to the degree of the monomial $m_f$.
\end{rmk}

\begin{exam}\label{ex:monomieforeste}
The admissible forest of Figure~\ref{fig_example_adm} is associated with the function $f$ with
\[
\supp f=\{\kk_{\{1,7,9,12\}},\kk_{\{8,10,13\}},\kk_{\{1,5,7,8,9,10,12,13\}},\kk_{\{2,6,11,14\}}\}
\]
and such that $f(\kk_{\{1,7,9,12\}})=2$, $f(\kk_{\{8,10,13\}})=1$, $f(\kk_{\{1,5,7,8,9,10,12,13\}})=1$, $f(\kk_{\{2,6,11,14\}})=1$. If we denote by $T_I$ the variable corresponding to $\kk_I\in \FFA{13}$, the monomial $m_f$ in this case is
\[
T_{\{1,7,9,12\}}^2 T_{\{8,10,13\}} T_{\{1,5,7,8,9,10,12,13\}} T_{\{2,6,11,14\}}.
\]

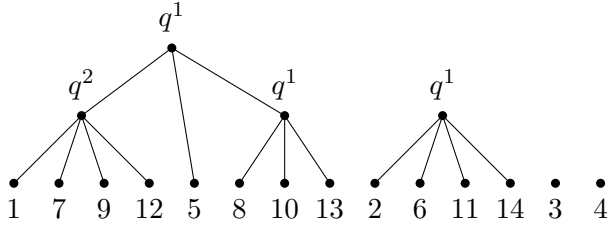
\begin{figure}[!htb]
\centering
\begin{tikzpicture}[scale=0.6]
\def\quota{1.5}
\draw (2.5,\quota) -- (1,0);
\draw (2.5,\quota) -- (2,0);
\draw (2.5,\quota) -- (3,0);
\draw (2.5,\quota) -- (4,0);
\draw (7,\quota) -- (6,0);
\draw (7,\quota) -- (7,0);
\draw (7,\quota) -- (8,0);
\draw (10.5,\quota) -- (9,0);
\draw (10.5,\quota) -- (10,0);
\draw (10.5,\quota) -- (11,0);
\draw (10.5,\quota) -- (12,0);
\draw (4.5,2*\quota) -- (2.5,\quota);
\draw (4.5,2*\quota) -- (5,0);
\draw (4.5,2*\quota) -- (7,\quota);
\fill (1,0) circle[radius=0.1] node[below=0.5ex] {$1$};
\fill (2,0) circle[radius=0.1] node[below=0.5ex] {$7$};
\fill (3,0) circle[radius=0.1] node[below=0.5ex] {$9$};
\fill (4,0) circle[radius=0.1] node[below=0.5ex] {$12$};
\fill (5,0) circle[radius=0.1] node[below=0.5ex] {$5$};
\fill (6,0) circle[radius=0.1] node[below=0.5ex] {$8$};
\fill (7,0) circle[radius=0.1] node[below=0.5ex] {$10$};
\fill (8,0) circle[radius=0.1] node[below=0.5ex] {$13$};
\fill (9,0) circle[radius=0.1] node[below=0.5ex] {$2$};
\fill (10,0) circle[radius=0.1] node[below=0.5ex] {$6$};
\fill (11,0) circle[radius=0.1] node[below=0.5ex] {$11$};
\fill (12,0) circle[radius=0.1] node[below=0.5ex] {$14$};
\fill (13,0) circle[radius=0.1] node[below=0.5ex] {$3$};
\fill (14,0) circle[radius=0.1] node[below=0.5ex] {$4$};
\fill (2.5,\quota) circle [radius=0.1] node[above=0.5ex] {$q^2$};
\fill (7,\quota) circle [radius=0.1] node[above=0.5ex] {$q^1$};
\fill (10.5,\quota) circle [radius=0.1] node[above=0.5ex] {$q^1$};
\fill (4.5,2*\quota) circle [radius=0.1] node[above=0.5ex] {$q^1$};
\end{tikzpicture}
\caption{An example of an admissible forest on 14 leaves with degree 5.}
\label{fig_example_adm}
\end{figure}
\end{exam}

\noindent\underline{Element $b\in\Theta(\supp f)$.} To study the elements $b\in\Theta(\supp f)$, first of all we need to analyze $H^*(X_{\DeltaA{n-1}(\supp f)})$: it is easy to show that the subfan $\DeltaA{n-1}(\supp f)$ is isomorphic to $\DeltaA{k-1}$, where $k$ is the number of connected components of the forest associated with $f$ (the isomorphism is obtained by identifying  the coordinates associated with the leaves of the same tree). Therefore the elements of $\Theta(\supp f)$ are in bijection with the permutations of $S_k$, and any statistics on $S_k$ that is equidistributed with the statistic $\des$ makes this bijection grade-preserving. We choose to use the so-called \emph{lec} statistic, first introduced in~\cite{foatahan}. To describe it, we need a couple of definitions. In the following, a permutation in $S_n$ will be denoted by the ordered $n$-tuple $[\sigma(1),\dotsc,\sigma(n)]$.

\subsection{Some remarks on the statistic lec}\label{sec:whylec}
Given an ordered list of distinct numbers (not necessarily a permutation), say $\sigma=[\sigma_1,\ldots,\sigma_n]$, we denote by $\inv(\sigma)$ the set of inversions of $\sigma$:
\[
\inv(\sigma):=\{(i,j)\mid i<j,\ \sigma_i>\sigma_j\}.
\]
\begin{defn}
A \emph{hook} is an ordered list of distinct non-zero natural numbers $\tau=[t_1,\dotsc,t_h]$, with $h\geq 2$, such that $t_1>t_2$ and $t_2<t_3<\dotsb<t_h$ (this second condition applies only for $h\geq 3$).
\end{defn}

\begin{rmk}\label{rmk:hook}
Given $s$ numbers $1\leq j_1<\dotsb<j_s\leq n$ and $i\in \{1,\ldots,s-1\}$
there is a unique way to sort $\{j_1,\ldots,j_s\}$ so that they form a hook with exactly $i$ inversions, namely $[j_{i+1},j_1,\ldots,j_i,j_{i+2},\ldots,j_s]$.
\end{rmk}

It is easy to observe that every list of distinct numbers has a unique \emph{hook factorization} (this notion comes from~\cite{gessel}), i.e.\ it is possible to write $\sigma$ as a concatenation $\sigma=p\tau_1\dotsm\tau_k$ where each $\tau_i$ is a hook and $p$ is a list of increasing numbers. Notice that it is possible to have $k=0$, if $\sigma$ is an increasing sequence; also it may happen that $p=\emptyset$ ($\sigma=[3,1,2]$ is an example with $k=1$). The statistic $\lec$ is defined as
\[
\lec(\sigma)=\sum_{i=1}^{k}\card{\inv(\tau_i)}
\]
where $p\tau_1\dotsb\tau_k$ is the hook factorization of $\sigma$.

\begin{exam}
Let $\sigma=[10,13,14,8,3,6,5,4,7,11,12,9,1,2]$. Its hook factorization is
\[
[10,13,14]\,[8,3,6]\,[5,4,7,11,12]\,[9,1,2]
\]
and $\lec(\sigma)=2+1+2=5$.
\end{exam}

Our choice of the statistic $\lec$ has been again inspired by  the theory of wonderful models. As it is well known (see for instance~\cite{batyrevblume}), $\XdeltaA{n-1}$ can be also seen as a projective wonderful model for the boolean hyperplane arrangement in $\PP^{n-1}$. More precisely, it is the maximal model: the (projective) hyperplanes are 
\[
H_i=\{[z_1,z_2,\dotsc,z_n]\in\PP^{n-1}\st z_i=0\}
\]
for \(i=1,\dotsc,n\) and the building set is provided by the full poset of their intersections.
The nested sets in this case are simply the chains of elements in this poset.

Therefore from~\cite{gaiffiselecta} we know how to describe  a monomial basis of  $H^*(\XdeltaA{n-1},\ZZ)$. In fact we will describe a  basis of the cohomology of the corresponding non-projective model, but the two cohomologies are isomorphic (this is a general property, see~\cite{wonderful1, gaiffiselecta}).

A monomial in this basis is a product of Chern classes associated with an admissible function (in analogy with our previous definitions; see also~\cite{yuzvinsky,gaiffiselecta}). In particular, the support of the (function associated with the) monomial is a chain of subsets of $\{1,\ldots,n\}$.

As an example let $n=10$ and consider the monomial
\[
\zeta_{\{1,2\}}\zeta_{\{1,2,4,5,6\}}^2\zeta_{\{1,2,4,5,6,7,8\}}
\]
which is an element of the basis of the cohomology of $\XdeltaA{9}$;
the variable $\zeta_{I}$, $I\subset\{1,\ldots, 10\}$, is the Chern class of the irreducible divisor obtained as proper transform of the subspace $H_{I}:=\cap_{i\in I}H_i$.
Notice that, for instance, the exponent of $\zeta_{\{1,2,4,5,6\}}$ is strictly less than $3$, i.e.\ the codimension of $H_{\{1,2,4,5,6\}}$ in $H_{\{1,2\}}$. 

We show an algorithm producing a bijection between this monomial  basis of $H^*(\XdeltaA{n-1},\ZZ)$ and \(S_n\), which is grade-preserving provided that we consider in \(S_n\) the grade induced by the statistic $\lec$.
The idea is to write a permutation $\sigma\in S_n$ in terms of its hook decomposition, associating a hook with every power of Chern class appearing in the monomial.
\begin{itemize}
\item We first look at the elements in $\{1,\ldots,n\}$ that do not appear in the support of the monomial. We write them in increasing order obtaining the non-hook part $p$ of $\sigma$. In our example we have $\{1,\dotsc,10\}\setminus \{1,2,4,5,6,7,8\}=\{3,9,10\}$ so $p=[3,9,10]$.
\item We then create the first hook of $\sigma$ by using Remark~\ref{rmk:hook} with the numbers in the smallest set of the support of the monomial, and the number of the inversions given by the corresponding exponent. In our example the smallest set is $\{1,2\}$ with exponent $1$ so $\tau_1=[2,1]$.
\item The second hook of $\sigma$ is formed using the numbers of the second set of the chain that do not appear in the smallest one. In the example those numbers are $\{4,5,6\}=\{1,2,4,5,6\}\setminus \{1,2\}$, so we form the hook with two inversions since $2$ is the exponent of $\zeta_{\{1,2,4,5,6\}}$ in the monomial: $\tau_2=[6,4,5]$. 
\item We go on building the $i$-th hook $\tau_i$ by looking at the numbers in the $i$-th set of the support of the monomial that do not appear in the $(i-1)$-th set. In our case there is only one set remaining: we pick $\{7,8\}$ from $\{1,2,4,5,6,7,8\}$ and form the hook $[8,7]$.
\end{itemize}
In the end we obtain $\sigma=[3,9,10]\,[2,1]\,[6,4,5]\,[8,7]$, which has $\lec(\sigma)=4$.

Notice that this bijection, if one already knows that the Betti numbers of $\XdeltaA{n-1}$ coincide with the Eulerian numbers, gives a geometric interpretation of the fact that the $\lec$ statistic is Eulerian.

\subsection{The toric model and the subspace model: an explicit bijection between their cohomology bases}
In Section~\ref{subsec:AnBasis} we have established a bijection between the basis $\bb_{\FFA{n-1}}$ and the set of pairs $(F,\sigma)$ where:
\begin{itemize}
\item $F$ is an admissible forest on $n$ leaves,
\item $\sigma$ is a permutation in $S_m$, where $m$ is the number of connected components of the forest $F$.
\end{itemize}
If we define the degree of a pair as $\deg(F,\sigma)=\deg(F)+\lec(\sigma)$, this bijection is grade-preserving.

Now, it can be proved that the model $\YYAT{n-1}$ is isomorphic to the projective model $\YYAH{n}$ for the hyperplane arrangement of type $A_n$, obtained by blowing up the building set of irreducible elements. Even if we don't use this fact in the present paper (we mention it only as inspiring additional information), we sketch here a proof.

The first step consists in noticing that both the toric and the hyperplane arrangements can be seen as the same subspace arrangement in a projective space of dimension $n-1$. On one side, the hyperplane arrangement of type $A_n$ can be seen as the arrangement in
\[
V=\{(0,x_1,\dotsc,x_n)\st x_i\in\CC\}\subseteq\CC^{n+1}
\]
given by the hyperplanes
\[
x_1=0,\dotsc,\ x_n=0,\ x_i-x_j=0\textrm{ for }1\leq i<j\leq n.
\]
The corresponding projective arrangement in $\PP(V)$ is given by the hyperplanes
\[
y_1=0,\dotsc,\ y_n=0,\ y_i-y_j=0\textrm{ for }1\leq i<j\leq n
\]
where, omitting the first zero, we denote by $[y_1,\dotsc,y_n]$ the projective coordinates of a point in $\PP(V)$. On the other side we observe that we can identify $(\CC^*)^{n-1}$ with
\[
\PP(\CC^n)\setminus\bigcup_{i=0}^{n-1}\{t_i=0\}
\]
via the map $(t_1,\dotsc,t_{n-1})\mapsto [1,t_1,\dotsc,t_{n-1}]$, where we denote by $[t_0,\dotsc,t_{n-1}]$ the projective coordinates in $\PP(\CC^n)$. In this setting, the divisorial layers of the toric arrangement of type $A_{n-1}$ $\TT=(\mathbb{C}^*)^n/H_n\simeq(\CC^*)^{n-1}$ are given by
\[
t_i=t_j\textrm{ for }0\leq i<j\leq n-1.
\]
Overall, we are considering in $\PP(\CC^n)$ the hyperplanes
\[
t_0=0,\dotsc,\ t_{n-1}=0,\ t_i-t_j=0\textrm{ for }0\leq i<j\leq n-1.
\]

The second step of the proof consists now in noticing that the two models $\YYAT{n-1}$ and $\YYAH{n}$ are obtained by blowing up the same subspaces; however, the two constructions differ in the order in which the blow ups are carried out, but thanks to~\cite[Theorem~1.3]{li} the two resulting varieties are isomorphic.

This suggests us to search for a grade-preserving bijection between the bases of the cohomologies of $\YYAT{n-1}$ and $\YYAH{n}$. Recall that a basis for the cohomology of $\YYAH{n}$ is in grade-preserving bijection with the set of admissible forests on $n+1$ leaves~\cite{yuzvinsky,gaiffiselecta}. So we describe an algorithm that produces an explicit bijection $\Psi$, associating a pair $(F,\sigma)$ (given by  an admissible forest $F$ on $n$ leaves with  $m$ trees and a permutation $\sigma\in S_m$) with an admissible forest $\mathcal{F}$ on $n+1$ leaves.

As a preliminary step we fix an ordering of the trees in $F$. For example we can say that $T<T'$ if the minimum index labelling the leaves in $T$ is smaller than the minimum index labelling the leaves in $T'$. We denote the trees accordingly as $T_1<T_2<\dotsb<T_m$.

Let $\sigma=p\tau_1\dotsm\tau_k$ be the hook factorization of $\sigma$.

\noindent\underline{Base step: $k=0$.} In this case $\mathcal{F}$ is obtained from $F$ by simply adding a connected component with a single vertex-leaf labeled with $n+1$.

\noindent\underline{Inductive step: $k>0$.} Let $\tau_k=[M,a_1,\dotsc,a_\ell]$ be the last hook of $\sigma$ and let $i=\card{\inv(\tau_k)}$, i.e.\ the number of inversions. We produce a tree $T$ connecting with a new internal labeled vertex the roots of the trees $T_M,T_{a_1},\ldots,T_{a_\ell}$ and an extra leaf labeled with $n+1$. We label this new internal vertex, which is the root of $T$, with $q^i$. Figure~\ref{fig_bijection_tree} shows the situation.
\begin{figure}[!htb]
\centering
\begin{tikzpicture}[scale=0.6]
\draw[line width=2pt,rounded corners,gray!25] (-1,-2.2) rectangle (1,0.5);
\draw (0,0) -- (-0.6,-1);
\draw (0,0) -- (0,-0.8);
\draw (0,0) -- (0.6,-1);
\fill (0,0) circle[radius=0.1];
\draw (0,-0.8) node[below]{$\ldots$};
\draw (0,-1.7) node {$T_{a_1}$};

\draw[line width=2pt,rounded corners,gray!25] (-3.5,-2.2) rectangle (-1.5,0.5);
\draw (-2.5,0) -- (-3.1,-1);
\draw (-2.5,0) -- (-2.5,-0.8);
\draw (-2.5,0) -- (-1.9,-1);
\fill (-2.5,0) circle[radius=0.1];
\draw (-2.5,-0.8) node[below]{$\ldots$};
\draw (-2.5,-1.7) node {$T_{M}$};

\draw[line width=2pt,rounded corners,gray!25] (2.5,-2.2) rectangle (4.5,0.5);
\draw (3.5,0) -- (4.1,-1); 
\draw (3.5,0) -- (3.5,-0.8);
\draw (3.5,0) -- (2.9,-1);
\fill (3.5,0) circle[radius=0.1];
\draw (3.5,-0.8) node[below]{$\ldots$};
\draw (3.5,-1.7) node {$T_{a_\ell}$};

\fill (7,-1) circle[radius=0.1] node[below] {$n+1$};

\draw (2.5,2.5) -- (-2.5,0);
\draw (2.5,2.5) -- (0,0);
\draw (2.5,2.5) -- (3.5,0);
\draw (2.5,2.5) -- (7,-1);

\node at (1.75,-0.6) {$\ldots$};
\fill (2.5,2.5) circle [radius=0.1] node[above right] {$q^i$};
\end{tikzpicture}
\caption{The new tree $T$ obtained using the last hook $\tau_k$ of $\sigma$.}
\label{fig_bijection_tree}
\end{figure}
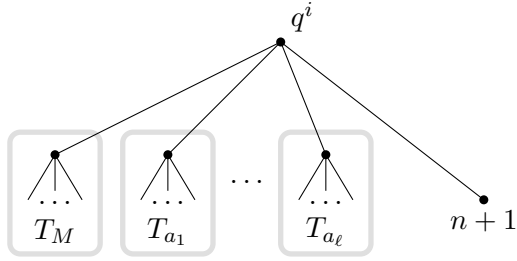

Now we consider the forest $F'$ obtained from $F$ by removing the trees $T_M,T_{a_1},\ldots,T_{a_\ell}$ and the list $\sigma'=p\tau_1\dotsb \tau_{k-1}$ and we apply the same construction to the pair $(F',\sigma')$, with the difference that now, instead of connecting the trees to an extra leaf labeled $n+1$, we connect them to the root of the tree $T$ obtained in the previous step. The algorithm is repeated inductively until there are no hooks remaining.

To prove that this algorithm defines a bijection, we present the reverse algorithm that computes $\Psi^{-1}$, associating an admissible forest $\mathcal{F}$ on $n+1$ leaves with a pair $(F,\sigma)$.

\noindent\underline{Description of the forest $F$.} As the first step of the reverse algorithm, we remove all the internal vertices of $\mathcal{F}$ that have the leaf labeled $n+1$ among their descendants. When we remove a vertex, we remove also its label and all its outgoing edges (but not their descendants). Then we remove the leaf labeled with $n+1$. In this way, we have obtained a forest $F$ on $n$ leaves, and we sort its connected components according to the usual ordering $T_1,\ldots,T_m$ (see Figure~\ref{fig_reverse_bijection}).
\begin{figure}[!htb]
\centering
\begin{tikzpicture}
\def\quota{1.2}
\node[anchor=south] at (0,0) {%
\begin{tikzpicture}[scale=0.6]
\draw (2,\quota) -- (1,0);
\draw (2,\quota) -- (2,0);
\draw (2,\quota) -- (3,0);
\draw (3.5,2*\quota) -- (2,\quota);
\draw (3.5,2*\quota) -- (4,0);
\draw (3.5,2*\quota) -- (5,0);
\draw (3.5,2*\quota) -- (6,0);
\fill (1,0) circle[radius=0.1] node[below=0.5ex] {$1$};
\fill (2,0) circle[radius=0.1] node[below=0.5ex] {$4$};
\fill (3,0) circle[radius=0.1] node[below=0.5ex] {$5$};
\fill (4,0) circle[radius=0.1] node[below=0.5ex] {$2$};
\fill (5,0) circle[radius=0.1] node[below=0.5ex] {$3$};
\fill (6,0) circle[radius=0.1] node[below=0.5ex] {$7$};
\fill (7,0) circle[radius=0.1] node[below=0.5ex] {$6$};
\fill (2,\quota) circle [radius=0.1] node[above=0.5ex] {$q^1$};
\fill (3.5,2*\quota) circle [radius=0.1] node[above=0.5ex] {$q^1$};
\end{tikzpicture}};
\node at (3,\quota) {$\longrightarrow$};
\node[anchor=south] at (6,0) {%
\begin{tikzpicture}[scale=0.6]
\draw (2,\quota) -- (1,0);
\draw (2,\quota) -- (2,0);
\draw (2,\quota) -- (3,0);
\fill (1,0) circle[radius=0.1] node[below=0.5ex] {$1$};
\fill (2,0) circle[radius=0.1] node[below=0.5ex] {$4$};
\fill (3,0) circle[radius=0.1] node[below=0.5ex] {$5$};
\fill (4,0) circle[radius=0.1] node[below=0.5ex] {$2$};
\fill (5,0) circle[radius=0.1] node[below=0.5ex] {$3$};
\fill (6,0) circle[radius=0.1] node[below=0.5ex] {$6$};
\fill (2,\quota) circle [radius=0.1] node[above=0.5ex] {$q^1$};
\end{tikzpicture}};
\end{tikzpicture}
\caption{An example of the application of the reverse algorithm.}
\label{fig_reverse_bijection}
\end{figure}
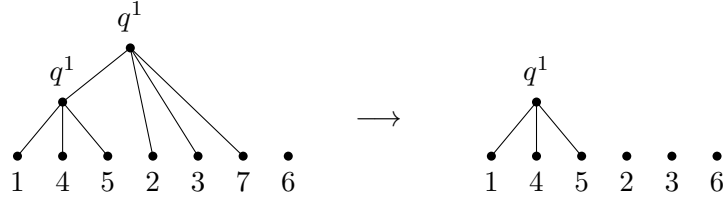

\noindent\underline{Description of the permutation $\sigma$.} We now need to describe $\sigma\in S_m$. If $\mathcal{F}$ has no internal vertices with the leaf $n+1$ as a descendant, we just take $\sigma=e$, the identity in $S_m$. Otherwise, let $v$ be the vertex in $\mathcal{F}$ that covers $n+1$; let $q^i$ be its label and let $\{T_{a_1},\ldots,T_{a_s},n+1\}$ be the set of connected components of the forest obtained by removing $v$ and its outgoing edges from the subtree of $\mathcal{F}$ with root $v$. The situation is described in Figure~\ref{fig_bijection_vertex_v}. Since $\mathcal{F}$ is an admissible forest we have $1\leq i\leq s-1$, so we can apply Remark~\ref{rmk:hook} to the set $\{a_1,\ldots,a_s\}$  and obtain the hook $[a_{i+1},a_1,\ldots,a_{i},a_{i+2},\ldots,a_s]$, which will be the last hook of the permutation $\sigma$.
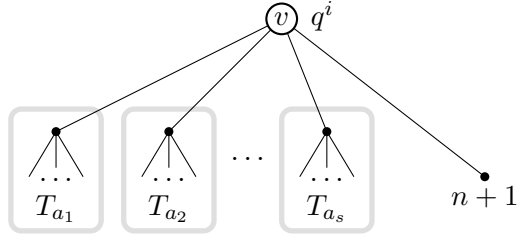
\begin{figure}[!htb]
\centering
\usetikzlibrary{calc}
\begin{tikzpicture}[scale=0.6]
\draw[line width=2pt,rounded corners,gray!25] (-1,-2.2) rectangle (1,0.5);
\draw (0,0) -- (-0.6,-1);
\draw (0,0) -- (0,-0.8);
\draw (0,0) -- (0.6,-1);
\fill (0,0) circle[radius=0.1];
\draw (0,-0.8) node[below]{$\ldots$};
\draw (0,-1.7) node {$T_{a_2}$};

\draw[line width=2pt,rounded corners,gray!25] (-3.5,-2.2) rectangle (-1.5,0.5);
\draw (-2.5,0) -- (-3.1,-1);
\draw (-2.5,0) -- (-2.5,-0.8);
\draw (-2.5,0) -- (-1.9,-1);
\fill (-2.5,0) circle[radius=0.1];
\draw (-2.5,-0.8) node[below]{$\ldots$};
\draw (-2.5,-1.7) node {$T_{a_1}$};

\draw[line width=2pt,rounded corners,gray!25] (2.5,-2.2) rectangle (4.5,0.5);
\draw (3.5,0) -- (4.1,-1); 
\draw (3.5,0) -- (3.5,-0.8);
\draw (3.5,0) -- (2.9,-1);
\fill (3.5,0) circle[radius=0.1];
\draw (3.5,-0.8) node[below]{$\ldots$};
\draw (3.5,-1.7) node {$T_{a_s}$};

\fill (7,-1) circle[radius=0.1] node[below] {$n+1$};

\draw (2.5,2.5) -- (-2.5,0);
\draw (2.5,2.5) -- (0,0);
\draw (2.5,2.5) -- (3.5,0);
\draw (2.5,2.5) -- (7,-1);

\node at (1.75,-0.6) {$\ldots$};
\node[circle,fill=white,draw,thick,inner sep=1.5pt] (v) at (2.5,2.5) {$v$};
\node[inner sep=0pt] at (3.4,2.57) {$q^i$};
\end{tikzpicture}
\caption{The vertex $v$ covers the leaf labeled with $n+1$.}
\label{fig_bijection_vertex_v}
\end{figure}

Let now $\{1,\ldots,m\}\setminus \{a_1,\ldots,a_s\}=\{b_{1},\ldots,b_{m-s}\}$, with $b_1<\dotsb<b_{m-s}$. If in $\mathcal{F}$ there are no vertices that cover $v$ we define
\[
\sigma=[b_1,\ldots,b_{m-s},a_{i+1},a_1,\ldots,a_{i},a_{i+2},\ldots,a_s];
\]
if instead there is a vertex, say $w$, that covers $v$ in $\mathcal{F}$ we have a picture like Figure~\ref{fig_bijection_vertex_w}, with $c_1<\dotsb<c_h$ and $1\leq r \leq h-1$. We repeat the same step as we did for $v$, obtaining a new hook $[c_{r+1},c_1,\ldots,c_r,c_{r+2},\ldots,c_h]$ so that the last part of $\sigma$ is now
\[
[c_{r+1},c_1,\ldots,c_r,c_{r+2},\ldots,c_h,a_{i+1},a_1,\ldots,a_{i-1},a_{i+2},\ldots,a_s].
\]
\begin{figure}[!htb]
\centering
\usetikzlibrary{calc}
\begin{tikzpicture}[scale=0.6]
\draw[line width=2pt,rounded corners,gray!25] (-1,-2.2) rectangle (1,0.5);
\draw (0,0) -- (-0.6,-1);
\draw (0,0) -- (0,-0.8);
\draw (0,0) -- (0.6,-1);
\fill (0,0) circle[radius=0.1];
\draw (0,-0.8) node[below]{$\ldots$};
\draw (0,-1.7) node {$T_{c_1}$};

\draw[line width=2pt,rounded corners,gray!25] (2.5,-2.2) rectangle (4.5,0.5);
\draw (3.5,0) -- (4.1,-1); 
\draw (3.5,0) -- (3.5,-0.8);
\draw (3.5,0) -- (2.9,-1);
\fill (3.5,0) circle[radius=0.1];
\draw (3.5,-0.8) node[below]{$\ldots$};
\draw (3.5,-1.7) node {$T_{c_h}$};

\draw[line width=2pt,rounded corners,gray!25] (5.5,-2.2) rectangle (8.5,0.5);
\draw (7,0) -- (6.4,-1);
\draw (7,0) -- (7,-0.8);
\draw (7,0) -- (7.6,-1);
\draw (7,-0.8) node[below]{$\ldots$};
\draw (7,-1.7) node {(Fig.~\ref{fig_bijection_vertex_v})};

\draw (3,2.5) -- (3.5,0);
\draw (3,2.5) -- (0,0);
\draw (3,2.5) -- (7,0);

\node[circle,fill=white,draw,thick,inner sep=1pt] (v) at (7,0) {$v$};

\node at (1.75,-0.6) {$\ldots$};
\node[circle,fill=white,draw,thick,inner sep=1.5pt] (v) at (3,2.5) {$w$};
\node[inner sep=0pt] at (3.9,2.57) {$q^r$};
\end{tikzpicture}
\caption{The vertex $w$ covers the vertex $v$ of Figure~\ref{fig_bijection_vertex_v}.}
\label{fig_bijection_vertex_w}
\end{figure}
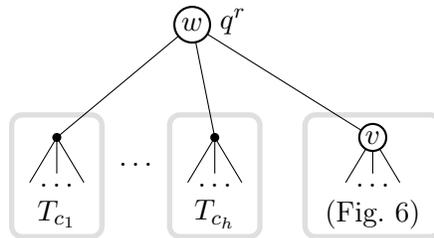

We repeat the previous steps as long as there are internal vertices in $\mathcal{F}$ covering the last vertex that we removed.
\subsection{A combinatorial proof, with a geometrical interpretation, that lec is Eulerian}
The bijection $\Psi$ described above allows us to give a new proof that $\lec$ is an Eulerian statistic. This proof is purely combinatorial, and therefore in particular it differs from the one sketched in Section~\ref{sec:whylec}, which uses the fact that the Betti numbers of $\XdeltaA{n-1}$ are Eulerian numbers.  Nevertheless our proof has a geometric inspiration that comes from counting elements of monomial bases of cohomologies of models. We first need to introduce some generating functions.

Let $\lambda(q,t)$ be the generating function of the admissible trees, i.e.\ the series whose coefficient of ${q^it^k}/{k!}$ counts the number of admissible trees of degree $i$ on $k$ leaves (see~\cite{yuzvinsky,gaiffiselecta,manin}). There are explicit combinatorial ways to compute the series $\lambda$, as the following theorem shows. 
\begin{thm}[{\cite[Theorem 4.1]{gaiffiselecta}}]\label{teo_recurrence_lambda}
Let $\lambda$ defined as above. Then we have the following recurrence relation:
\[
\frac{\partial}{\partial t}\lambda=1+\frac{\frac{\partial}{\partial t}\lambda}{q-1}(e^{q\lambda}-qe^{\lambda}+q-1).
\]
In other words
\begin{equation}\label{eq_lambda_primo}
\frac{\partial}{\partial t}\lambda = \frac{1-q}{e^{q\lambda}-qe^{\lambda}}.
\end{equation}
\end{thm}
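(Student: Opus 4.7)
The plan is to establish the recurrence via a direct combinatorial decomposition, interpreting $\partial \lambda / \partial t$ as the exponential generating function for admissible trees carrying one marked leaf. This interpretation follows from the standard species-theoretic principle that differentiation of an EGF with respect to the size parameter $t$ corresponds to distinguishing a labeled element in the underlying structure.

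First I would classify such marked admissible trees according to their root: either (a) the tree is the trivial one-vertex tree, so the marked leaf \emph{is} the tree and this contributes the constant $1$; or (b) the root is an internal vertex with $m\geq 3$ children, carrying a label $q^i$ with $i\in\{1,\dotsc,m-2\}$, and each child is itself an admissible subtree, with the marked leaf lying in exactly one of the $m$ children. For fixed $m$ in case~(b), the unordered family of children (one of them pointed, the other $m-1$ plain) has EGF
\[
\frac{\lambda^{m-1}}{(m-1)!}\cdot\frac{\partial\lambda}{\partial t},
\]
since this is exactly $\partial_t(\lambda^m/m!)$; and the choice of label on the root contributes the factor $\sum_{i=1}^{m-2} q^i = (q^{m-1}-q)/(q-1)$. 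Summing the two cases yields
\[
\frac{\partial\lambda}{\partial t}=1+\frac{\partial\lambda}{\partial t}\sum_{m\geq 3}\frac{q^{m-1}-q}{q-1}\cdot\frac{\lambda^{m-1}}{(m-1)!}.
\]

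The remaining step is routine algebra: substituting $n=m-1$ and using the truncated exponentials
\[
\sum_{n\geq 2}\frac{(q\lambda)^n}{n!}=e^{q\lambda}-1-q\lambda,\qquad \sum_{n\geq 2}\frac{\lambda^n}{n!}=e^{\lambda}-1-\lambda,
\]
one gets
\[
\sum_{m\geq 3}\frac{q^{m-1}-q}{q-1}\cdot\frac{\lambda^{m-1}}{(m-1)!}=\frac{e^{q\lambda}-qe^{\lambda}+q-1}{q-1},
\]
which is precisely the first form of the recurrence; isolating $\partial\lambda/\partial t$ then produces the closed form \eqref{eq_lambda_primo}. The only point requiring care is the symmetry factor in case~(b): because the children form an unordered set but one is implicitly singled out by containing the marked leaf, the correct EGF is $\lambda^{m-1}\cdot\partial_t\lambda/(m-1)!$ and not $\lambda^m/m!$. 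Apart from this bookkeeping, the argument is a direct root-decomposition of the species of admissible trees, entirely parallel to the classical derivation of functional equations for trees with internal labels (as in the linear case treated in \cite{gaiffiselecta,yuzvinsky,manin}).
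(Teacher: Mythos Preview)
The paper does not actually prove this theorem: it is stated with a citation to \cite[Theorem~4.1]{gaiffiselecta} and no argument is given. So there is no ``paper's own proof'' to compare your attempt against.

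That said, your proof is correct. The root-decomposition of the species of admissible trees, with $\partial\lambda/\partial t$ interpreted as the EGF for trees with a distinguished leaf, is exactly the intended combinatorial argument behind the recurrence, and your bookkeeping (the symmetry factor $\lambda^{m-1}/(m-1)!\cdot\partial_t\lambda$ for an unordered $m$-tuple of subtrees with one pointed, the label weight $(q^{m-1}-q)/(q-1)$, and the truncated-exponential simplification) is all accurate. This is precisely the style of derivation used in the cited reference, so your proposal is in line with the original source even though the present paper omits it.
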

The first few terms of $\lambda$ are
\[
\lambda(q,t)=t+q\frac{t^3}{3!}+(q+q^2)\frac{t^4}{4!}+\dotsb
\]
By standard combinatorial arguments we deduce that the generating function of the admissible forests is $e^\lambda-1$, and in particular the number of the admissible forests with $k$ connected components on $n$ leaves and degree $d$ is counted by the coefficient of $q^d{t^n}/{n!}$ in the series ${\lambda^k(q,t)}/{k!}$.

We define now the exponential generating function for the $\lec$ statistic
\[
\mathcal{L}(q,t)\coloneqq\sum_{n\geq 1}\left(\sum_{\sigma\in S_n}q^{\lec(\sigma)} \right)\frac{t^n}{n!}
\]
and the usual exponential generating function for any Eulerian statistic
\[
\mathcal{E}(q,t)\coloneqq\sum_{n\geq 1}\left(\sum_{\sigma\in S_n}q^{\des(\sigma)} \right)\frac{t^n}{n!}=\sum_{n\geq 1}\frac{A_n(q)}{q}\frac{t^n}{n!}.
\]
Our goal is to prove that $\mathcal{L}(q,t)= \mathcal{E}(q,t)$. 
This is equivalent to prove that   
\[
\mathcal{L}(q,\lambda(q,t))=\mathcal{E}(q,\lambda(q,t))
\]
since  $\lambda$, viewed as a series in $\ZZ[q][[t]]$, is invertible with respect to the composition (its constant term is zero and its degree~1 term is invertible in $\ZZ[q]$).

Notice that the coefficient of $q^dt^n/n!$ in the series $\mathcal{L}(q,\lambda(q,t))$ counts the pairs $(F,\sigma)$ where $F$ is an admissible forest on $n$ leaves and $\deg(F,\sigma)=d$; from the bijection $\Psi$ we deduce that the series $\mathcal{L}(q,\lambda(q,t))$ is equal to the series $e^\lambda-1$ shifted by one, i.e.
\begin{equation}\label{eq_PhiH_bijection}
\mathcal{L}(q,\lambda(q,t))=\frac{\partial}{\partial t}(e^{\lambda(q,t)}-1)-1.
\end{equation}
Now, a simple computation of formal power series gives that
\begin{equation}\label{eq_PhiT_PhiH}
\frac{\partial}{\partial t}(e^{\lambda(q,t)}-1)-1=\mathcal{E}(q,\lambda(q,t)).
\end{equation}
In fact from~\eqref{eq_eulerian_egf} we can write
\[
q\cdot\mathcal{E}(q,\lambda(q,t))=\frac{1-q}{1-qe^{\lambda(q,t)\cdot(1-q)}}-1
\]
from which we have
\[
\mathcal{E}(q,\lambda(q,t))=\frac{-1+e^{\lambda(q,t)\cdot (1-q)}}{1-qe^{\lambda(q,t)\cdot(1-q)}}=\frac{e^{\lambda(q,t)}-e^{\lambda(q,t)\cdot q}}{e^{\lambda(q,t)\cdot q}-qe^{\lambda(q,t)}};
\]
on the other hand
\begin{align*}
\frac{\partial}{\partial t}(e^{\lambda(q,t)}-1) -1&{}\stackrel{\phantom{\eqref{eq_lambda_primo}}}{=}e^{\lambda(q,t)}\frac{\partial}{\partial t}(\lambda(q,t))-1=\\
&{}\stackrel{\eqref{eq_lambda_primo}}{=}\frac{e^{\lambda(q,t)}(1-q)}{e^{\lambda(q,t)\cdot q}-qe^{\lambda(q,t)}}-1=
\frac{e^{\lambda(q,t)}-e^{\lambda(q,t)\cdot q}}{e^{\lambda(q,t)\cdot q}-qe^{\lambda(q,t)}}.
\end{align*}
By combining~\eqref{eq_PhiH_bijection} and~\eqref{eq_PhiT_PhiH} we conclude.

This proof has used only combinatorial arguments, but, as we remarked above, it has a geometric inspiration. We notice that some of the power series involved are actually the generating functions for the Poincaré polynomials of the compact models $\YYAT{n-1}$ and $\YYAH{n-1}$, defined as
\begin{align*}
\Phi^T(q,t)&{}:=\sum_{n\geq 1}P(\YYAT{n-1},q)\frac{t^n}{n!},\\
\Phi^H(q,t)&{}:=\sum_{n\geq 1}P(\YYAH{n-1},q)\frac{t^n}{n!}.
\end{align*}
In fact we already know that $\Phi^H(q,t)=e^{\lambda(q,t)}-1$, and the description of the basis for the toric model gives that $\Phi^T(q,t)=\mathcal{E}(q,\lambda(q,t))$. From this point of view we can read Equation~\eqref{eq_PhiT_PhiH} as
\[
\frac{\partial}{\partial t}\Phi^H(q,t) -1=\Phi^T(q,t),
\]
which reveals itself to be a consequence of the isomorphism between $\YYAT{n-1}$ and $\YYAH{n}$.

\appendix
\section{Description of the fan in Example~\ref{ex_arrangement}}\label{app:examplefan}
The following table lists the rays of the fan $\Delta$ associated with a good toric variety for the arrangement of Example~\ref{ex_arrangement}.
\begin{center}
\small
\begin{longtable}{>{$}c<{$}>{$}c<{$}>{$}c<{$}>{$}c<{$}}
\toprule
r_{1}\colon \left(0, -2, -1\right) &
r_{2}\colon \left(0, -1, -1\right) &
r_{3}\colon \left(-2, 1, -1\right) &
r_{4}\colon \left(-1, 1, -1\right) \\
r_{5}\colon \left(0, -1, 1\right) &
r_{6}\colon \left(0, 1, 0\right) &
r_{7}\colon \left(-2, 1, 1\right) &
r_{8}\colon \left(-1, 1, 1\right) \\
r_{9}\colon \left(1, -1, 1\right) &
r_{10}\colon \left(-1, 3, 1\right) &
r_{11}\colon \left(-2, 3, 1\right) &
r_{12}\colon \left(1, -2, -1\right) \\
r_{13}\colon \left(1, -1, -1\right) &
r_{14}\colon \left(6, -3, -1\right) &
r_{15}\colon \left(2, -1, -1\right) &
r_{16}\colon \left(2, -2, -1\right) \\
r_{17}\colon \left(2, -1, -2\right) &
r_{18}\colon \left(0, 0, -1\right) &
r_{19}\colon \left(2, -1, 1\right) &
r_{20}\colon \left(0, 0, 1\right) \\
r_{21}\colon \left(0, 2, 1\right) &
r_{22}\colon \left(1, 0, 1\right) &
r_{23}\colon \left(1, 0, -1\right) &
r_{24}\colon \left(-6, 3, 1\right) \\
r_{25}\colon \left(-2, -2, -1\right) &
r_{26}\colon \left(-2, -1, -1\right) &
r_{27}\colon \left(4, -3, -1\right) &
r_{28}\colon \left(0, -1, 0\right) \\
r_{29}\colon \left(2, 0, -1\right) &
r_{30}\colon \left(-5, 3, 1\right) &
r_{31}\colon \left(-2, -1, 1\right) &
r_{32}\colon \left(2, 0, 1\right) \\
r_{33}\colon \left(-1, -1, 1\right) &
r_{34}\colon \left(-1, -2, -1\right) &
r_{35}\colon \left(-1, 1, 0\right) &
r_{36}\colon \left(-2, 1, 0\right) \\
r_{37}\colon \left(2, 2, 1\right) &
r_{38}\colon \left(3, -3, -1\right) &
r_{39}\colon \left(1, -1, 0\right) &
r_{40}\colon \left(-4, 2, 1\right) \\
r_{41}\colon \left(-1, -1, -1\right) &
r_{42}\colon \left(2, -1, 0\right) &
r_{43}\colon \left(1, 2, 1\right) &
r_{44}\colon \left(5, -3, -1\right) \\
r_{45}\colon \left(-3, 3, 1\right) &
r_{46}\colon \left(-2, 0, 1\right) &
r_{47}\colon \left(-1, 0, 1\right) &
r_{48}\colon \left(3, -2, -1\right) \\
r_{49}\colon \left(-2, 0, -1\right) &
r_{50}\colon \left(-1, 0, -1\right) &
r_{51}\colon \left(-1, 2, 1\right) &
r_{52}\colon \left(-2, 2, 1\right) \\
r_{53}\colon \left(1, 0, 0\right) &
r_{54}\colon \left(0, -3, -1\right) &
r_{55}\colon \left(2, -3, -1\right) &
r_{56}\colon \left(-4, 3, 1\right) \\
r_{57}\colon \left(0, 1, -1\right) &
r_{58}\colon \left(2, 1, -1\right) &
r_{59}\colon \left(1, -3, -1\right) &
r_{60}\colon \left(0, 1, 1\right) \\
r_{61}\colon \left(-2, 1, 2\right) &
r_{62}\colon \left(2, 1, 1\right) &
r_{63}\colon \left(0, 3, 1\right) &
r_{64}\colon \left(-2, -3, -1\right) \\
r_{65}\colon \left(-1, -3, -1\right) &
r_{66}\colon \left(1, 3, 1\right) &
r_{67}\colon \left(4, -2, -1\right) &
r_{68}\colon \left(-1, 0, 0\right) \\
r_{69}\colon \left(2, 3, 1\right) &
r_{70}\colon \left(1, 1, -1\right) &
r_{71}\colon \left(1, 1, 1\right) &
r_{72}\colon \left(-3, 2, 1\right) \\
\bottomrule
\end{longtable}
\end{center}

The following table lists the maximal cones of the fan $\Delta$ associated with a good toric variety for the arrangement of Example~\ref{ex_arrangement}. Each cone is given by its generating rays.
\begin{center}
\begin{longtable}{>{$}c<{$}>{$}c<{$}>{$}c<{$}>{$}c<{$}}
\toprule
C(r_{6},r_{53},r_{69}) &
C(r_{37},r_{53},r_{69}) &
C(r_{37},r_{53},r_{62}) &
C(r_{6},r_{66},r_{69}) \\
C(r_{37},r_{66},r_{69}) &
C(r_{37},r_{43},r_{66}) &
C(r_{37},r_{43},r_{62}) &
C(r_{43},r_{62},r_{71}) \\
C(r_{6},r_{63},r_{66}) &
C(r_{21},r_{63},r_{66}) &
C(r_{21},r_{43},r_{66}) &
C(r_{21},r_{43},r_{60}) \\
C(r_{43},r_{60},r_{71}) &
C(r_{32},r_{53},r_{62}) &
C(r_{22},r_{32},r_{62}) &
C(r_{22},r_{62},r_{71}) \\
C(r_{22},r_{60},r_{71}) &
C(r_{20},r_{22},r_{60}) &
C(r_{6},r_{53},r_{58}) &
C(r_{6},r_{58},r_{70}) \\
C(r_{6},r_{57},r_{70}) &
C(r_{29},r_{53},r_{58}) &
C(r_{23},r_{29},r_{58}) &
C(r_{23},r_{58},r_{70}) \\
C(r_{18},r_{23},r_{70}) &
C(r_{18},r_{57},r_{70}) &
C(r_{19},r_{28},r_{39}) &
C(r_{19},r_{42},r_{53}) \\
C(r_{19},r_{39},r_{42}) &
C(r_{9},r_{19},r_{28}) &
C(r_{5},r_{9},r_{28}) &
C(r_{19},r_{32},r_{53}) \\
C(r_{19},r_{22},r_{32}) &
C(r_{19},r_{20},r_{22}) &
C(r_{9},r_{19},r_{20}) &
C(r_{5},r_{9},r_{20}) \\
C(r_{28},r_{54},r_{59}) &
C(r_{1},r_{54},r_{59}) &
C(r_{1},r_{12},r_{59}) &
C(r_{1},r_{2},r_{12}) \\
C(r_{28},r_{39},r_{55}) &
C(r_{28},r_{55},r_{59}) &
C(r_{12},r_{55},r_{59}) &
C(r_{15},r_{53},r_{67}) \\
C(r_{14},r_{42},r_{53}) &
C(r_{14},r_{53},r_{67}) &
C(r_{27},r_{39},r_{42}) &
C(r_{27},r_{42},r_{44}) \\
C(r_{27},r_{44},r_{48}) &
C(r_{15},r_{48},r_{67}) &
C(r_{14},r_{42},r_{44}) &
C(r_{14},r_{44},r_{48}) \\
C(r_{14},r_{48},r_{67}) &
C(r_{27},r_{38},r_{39}) &
C(r_{27},r_{38},r_{48}) &
C(r_{16},r_{38},r_{48}) \\
C(r_{15},r_{16},r_{48}) &
C(r_{13},r_{15},r_{16}) &
C(r_{2},r_{12},r_{13}) &
C(r_{38},r_{39},r_{55}) \\
C(r_{16},r_{38},r_{55}) &
C(r_{13},r_{16},r_{55}) &
C(r_{12},r_{13},r_{55}) &
C(r_{15},r_{29},r_{53}) \\
C(r_{15},r_{23},r_{29}) &
C(r_{13},r_{15},r_{17}) &
C(r_{15},r_{17},r_{23}) &
C(r_{13},r_{17},r_{18}) \\
C(r_{17},r_{18},r_{23}) &
C(r_{2},r_{13},r_{18}) &
C(r_{6},r_{10},r_{63}) &
C(r_{10},r_{21},r_{63}) \\
C(r_{10},r_{21},r_{51}) &
C(r_{21},r_{51},r_{60}) &
C(r_{6},r_{11},r_{35}) &
C(r_{6},r_{10},r_{11}) \\
C(r_{10},r_{11},r_{51}) &
C(r_{7},r_{40},r_{68}) &
C(r_{24},r_{36},r_{68}) &
C(r_{24},r_{40},r_{68}) \\
C(r_{35},r_{36},r_{56}) &
C(r_{30},r_{36},r_{56}) &
C(r_{30},r_{56},r_{72}) &
C(r_{7},r_{40},r_{72}) \\
C(r_{24},r_{30},r_{36}) &
C(r_{24},r_{30},r_{72}) &
C(r_{24},r_{40},r_{72}) &
C(r_{35},r_{45},r_{56}) \\
C(r_{45},r_{56},r_{72}) &
C(r_{45},r_{52},r_{72}) &
C(r_{7},r_{52},r_{72}) &
C(r_{7},r_{8},r_{52}) \\
C(r_{8},r_{51},r_{60}) &
C(r_{11},r_{35},r_{45}) &
C(r_{11},r_{45},r_{52}) &
C(r_{8},r_{11},r_{52}) \\
C(r_{8},r_{11},r_{51}) &
C(r_{7},r_{46},r_{68}) &
C(r_{7},r_{46},r_{47}) &
C(r_{7},r_{8},r_{61}) \\
C(r_{7},r_{47},r_{61}) &
C(r_{8},r_{20},r_{61}) &
C(r_{20},r_{47},r_{61}) &
C(r_{8},r_{20},r_{60}) \\
C(r_{3},r_{6},r_{35}) &
C(r_{3},r_{36},r_{68}) &
C(r_{3},r_{35},r_{36}) &
C(r_{3},r_{4},r_{6}) \\
C(r_{4},r_{6},r_{57}) &
C(r_{3},r_{49},r_{68}) &
C(r_{3},r_{49},r_{50}) &
C(r_{3},r_{18},r_{50}) \\
C(r_{3},r_{4},r_{18}) &
C(r_{4},r_{18},r_{57}) &
C(r_{28},r_{31},r_{68}) &
C(r_{28},r_{31},r_{33}) \\
C(r_{5},r_{28},r_{33}) &
C(r_{31},r_{46},r_{68}) &
C(r_{31},r_{46},r_{47}) &
C(r_{31},r_{33},r_{47}) \\
C(r_{20},r_{33},r_{47}) &
C(r_{5},r_{20},r_{33}) &
C(r_{28},r_{64},r_{68}) &
C(r_{25},r_{64},r_{68}) \\
C(r_{25},r_{26},r_{68}) &
C(r_{28},r_{64},r_{65}) &
C(r_{25},r_{64},r_{65}) &
C(r_{25},r_{34},r_{65}) \\
C(r_{25},r_{26},r_{34}) &
C(r_{26},r_{34},r_{41}) &
C(r_{28},r_{54},r_{65}) &
C(r_{1},r_{54},r_{65}) \\
C(r_{1},r_{34},r_{65}) &
C(r_{1},r_{2},r_{34}) &
C(r_{2},r_{34},r_{41}) &
C(r_{26},r_{49},r_{68}) \\
C(r_{26},r_{49},r_{50}) &
C(r_{26},r_{41},r_{50}) &
C(r_{2},r_{41},r_{50}) &
C(r_{2},r_{18},r_{50}) \\
\bottomrule
\end{longtable}
\end{center}

\section*{Acknowledgments}
The authors would like to thank Michele D'Adderio for the useful conversations and for pointing out the statistic $\lec$ introduced in~\cite{foatahan}. The authors also acknowledge the support of INdAM-GNSAGA.

\printbibliography
\end{document}